
\documentclass[preprint,12pt,times,numbers]{elsarticle}




\usepackage{amssymb}
\usepackage{subfigure}





\usepackage{mathrsfs,amsbsy}
\usepackage{dsfont}
\usepackage{graphicx}
\usepackage{amsmath,amsfonts,amsthm}
\usepackage{latexsym}
\usepackage{color}
\usepackage{fixmath}
\usepackage{version}
\excludeversion{hide}

\newtheorem{theorem}{Theorem}[section]
\newtheorem{lemma}{Lemma}[section]
\newtheorem{corollary}{Corollary}[section]
\newtheorem{prop}{Proposition}[section]
\newtheorem{remark}{{\sc Remark}}[section]

\newcommand{\diag}{\operatorname{diag}}
\newcommand{\half}{\frac{1}{2}}
\newcommand{\ignore}[1]{}

\newcommand{\bdm}{\begin{displaymath}}
\newcommand{\edm}{\end{displaymath}}
\renewcommand{\(}{\left(}
\renewcommand{\)}{\right)}
\newcommand{\cond}{\,|\,}
\newcommand{\Sm}[2]{{\displaystyle \sum_{#1}^{#2}}}

\newcommand{\bbe}{{\boldsymbol e}}
\newcommand{\bq}{{\boldsymbol q}}
\newcommand{\bv}{{\boldsymbol v}}
\newcommand{\bx}{{\boldsymbol x}}
\newcommand{\by}{{\boldsymbol y}}
\newcommand{\bphi}{{\boldsymbol \phi}}

\newcommand{\C}{{\mathbb C}}
\newcommand{\N}{{\mathbb N}}
\newcommand{\Rf}{{\mathbb R}}
\newcommand{\Z}{{\mathbb Z}}
\newcommand{\D}{{\mathcal D}}

\journal{}

\begin{document}

\begin{frontmatter}

\title{Mysteries around the graph {L}aplacian eigenvalue 4}

\author[yuji]{Yuji Nakatsukasa}
\ead{yuji.nakatsukasa@manchester.ac.uk}

\author[ucd]{Naoki Saito\corref{ns}}
\ead{saito@math.ucdavis.edu}

\author[ucd]{Ernest Woei}
\ead{woei@math.ucdavis.edu}

\address[yuji]{School of Mathematics, The University of Manchester, Manchester, M13 9PL, UK}

\address[ucd]{Department of Mathematics, University of California, Davis, CA 95616, USA}

\cortext[ns]{Corresponding author}





\begin{abstract}
We describe our current understanding on the phase transition phenomenon
of the graph Laplacian eigenvectors constructed on a certain type of 
unweighted trees, which we previously observed through our numerical experiments.
The eigenvalue distribution for such a tree is a smooth bell-shaped curve 
starting from the eigenvalue 0 up to 4.  Then, at the eigenvalue 4, 
there is a sudden jump.  Interestingly, the eigenvectors corresponding to 
the eigenvalues below 4 are \emph{semi-global} oscillations (like Fourier 
modes) over the entire tree or one of the branches; on the other
hand, those corresponding to the eigenvalues above 4 are much more 
\emph{localized} and \emph{concentrated} (like wavelets) around 
junctions/branching vertices.
For a special class of trees called \emph{starlike trees}, we obtain
a complete understanding of such phase transition phenomenon.
For a general graph, we prove the number of the eigenvalues larger than 4 
is bounded from above by the number of vertices whose degrees is strictly
higher than 2.
Moreover, we also prove that if a graph contains a branching path, then 
the magnitudes of the components of any eigenvector corresponding to the
eigenvalue greater than 4 decay exponentially from the branching vertex toward 
the leaf of that branch.
\end{abstract}

\begin{keyword}
graph Laplacian, localization of eigenvectors, phase transition phenomena, starlike trees, dendritic trees, Gerschgorin's disks

\MSC 15A22 \sep 15A42 \sep 65F15
\end{keyword}

\end{frontmatter}

\section{Introduction}
In our previous report \cite{SAITO-WOEI-DENDRITES}, we proposed a 
method to characterize dendrites of neurons, more specifically retinal ganglion
cells (RGCs) of a mouse, and cluster them into different cell types using their 
morphological features, which are derived from the eigenvalues
of the graph Laplacians when such dendrites are represented as graphs
(in fact literally as ``trees'').
For the details on the data acquisition and the conversion of dendrites
to graphs, see \cite{SAITO-WOEI-DENDRITES} and the references therein.
While analyzing the eigenvalues and eigenvectors of those graph
Laplacians, we observed a very peculiar \emph{phase transition phenomenon} 
as shown in Figure~\ref{fig:eigvaldist}.
\begin{figure}
\begin{center}
\renewcommand{\subfigtopskip}{0pt}
\renewcommand{\subfigbottomskip}{0pt}
\renewcommand{\subfigcapskip}{0pt}
\renewcommand{\subfigcapmargin}{0pt}
\renewcommand{\thesubfigure}{(a)}
\subfigure[RGC \#60]{\includegraphics[width=0.49\textwidth]{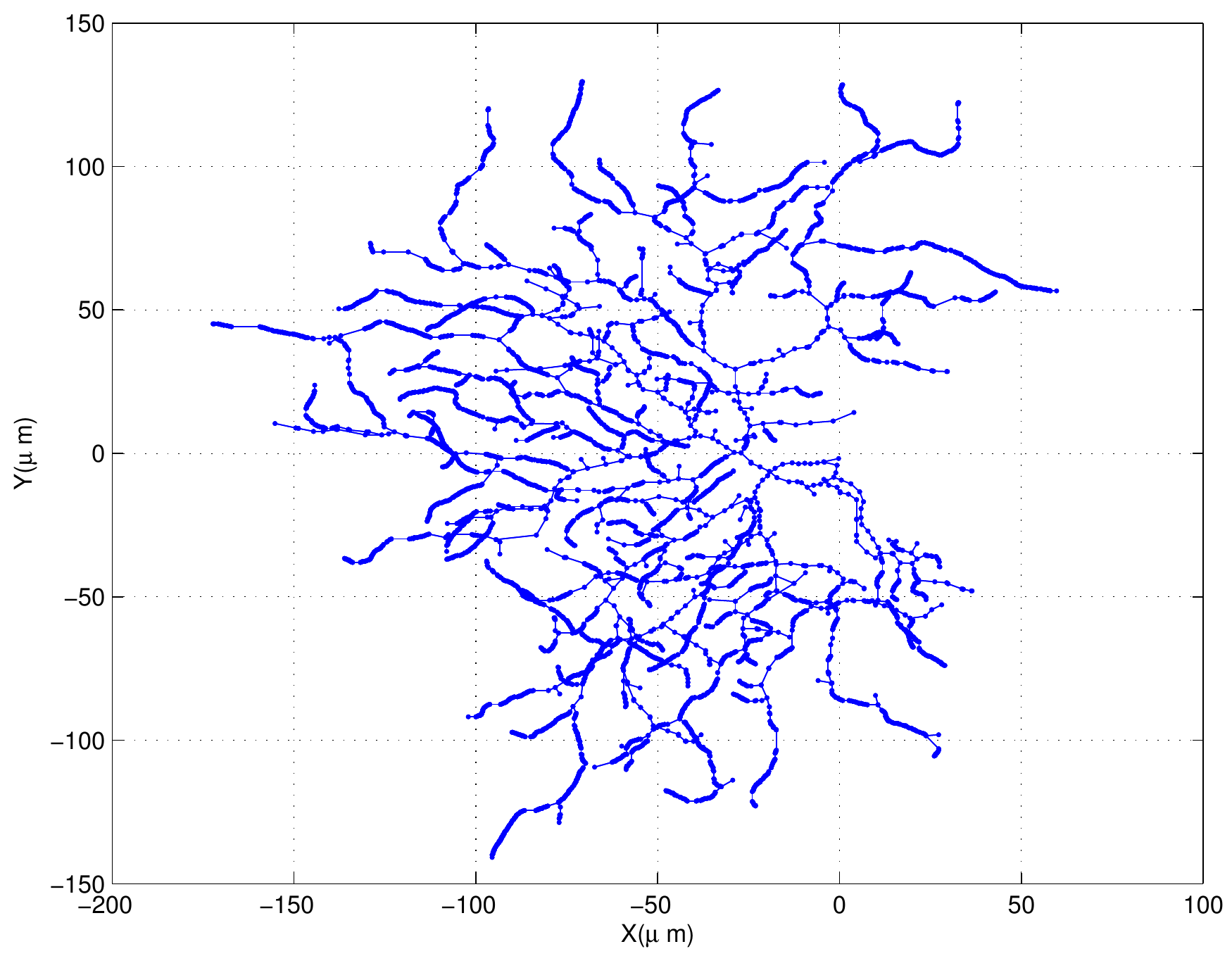}}
\renewcommand{\thesubfigure}{(b)}
\subfigure[RGC \#100]{\includegraphics[width=0.49\textwidth]{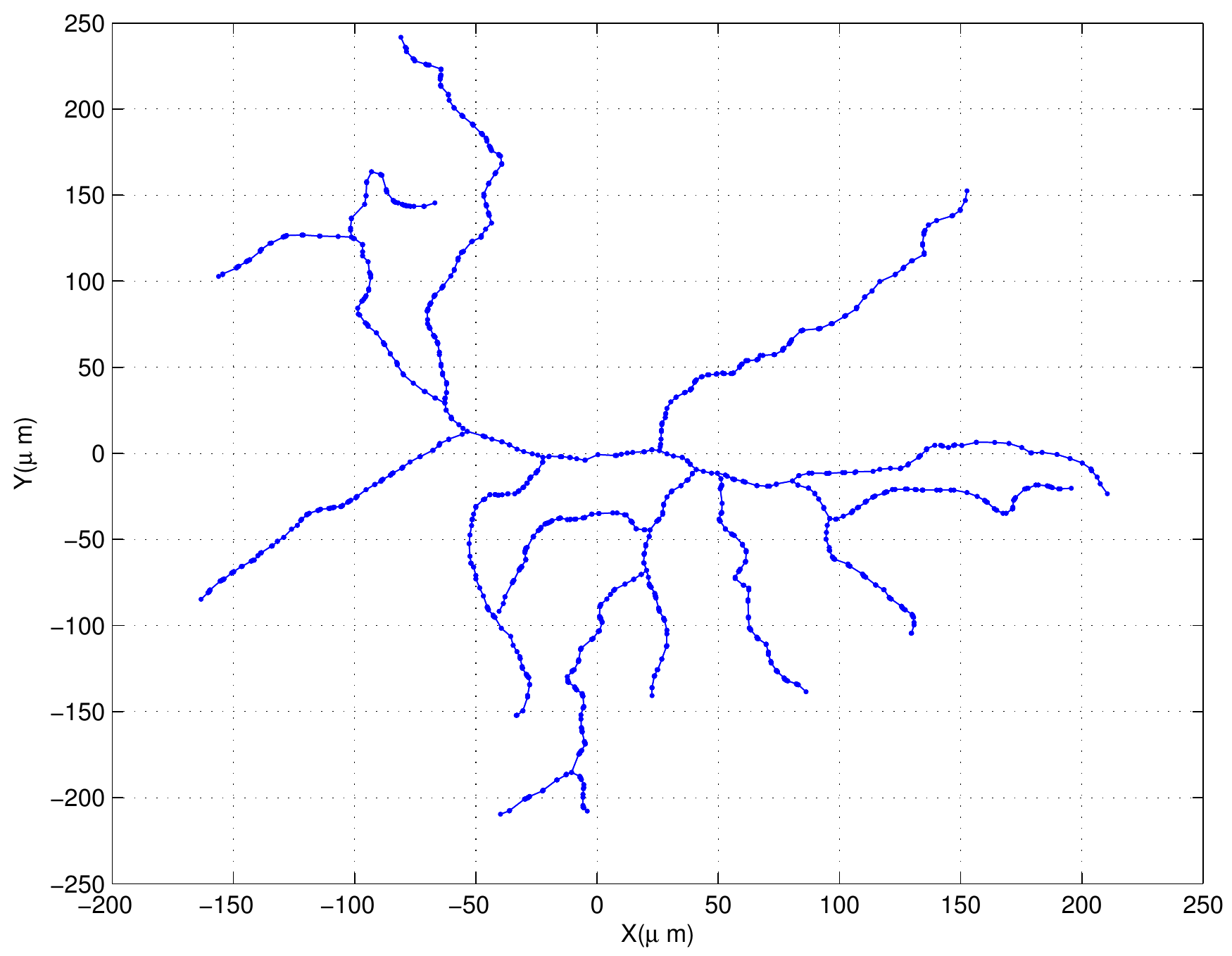}}\\
\renewcommand{\thesubfigure}{(c)}
\subfigure[Eigenvalues of (a)]{\includegraphics[width=0.49\textwidth]{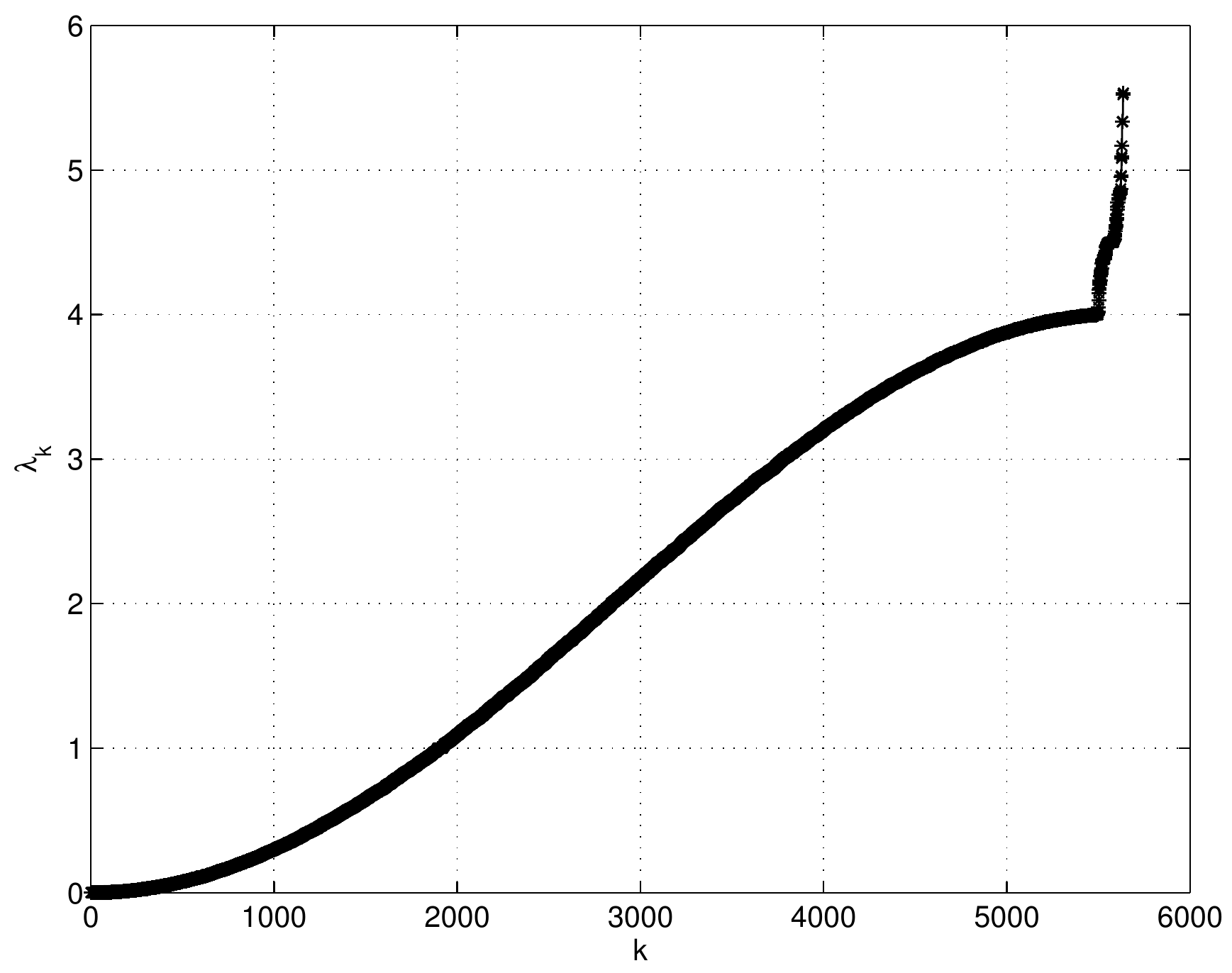}}
\renewcommand{\thesubfigure}{(d)}
\subfigure[Eigenvalues of (b)]{\includegraphics[width=0.49\textwidth]{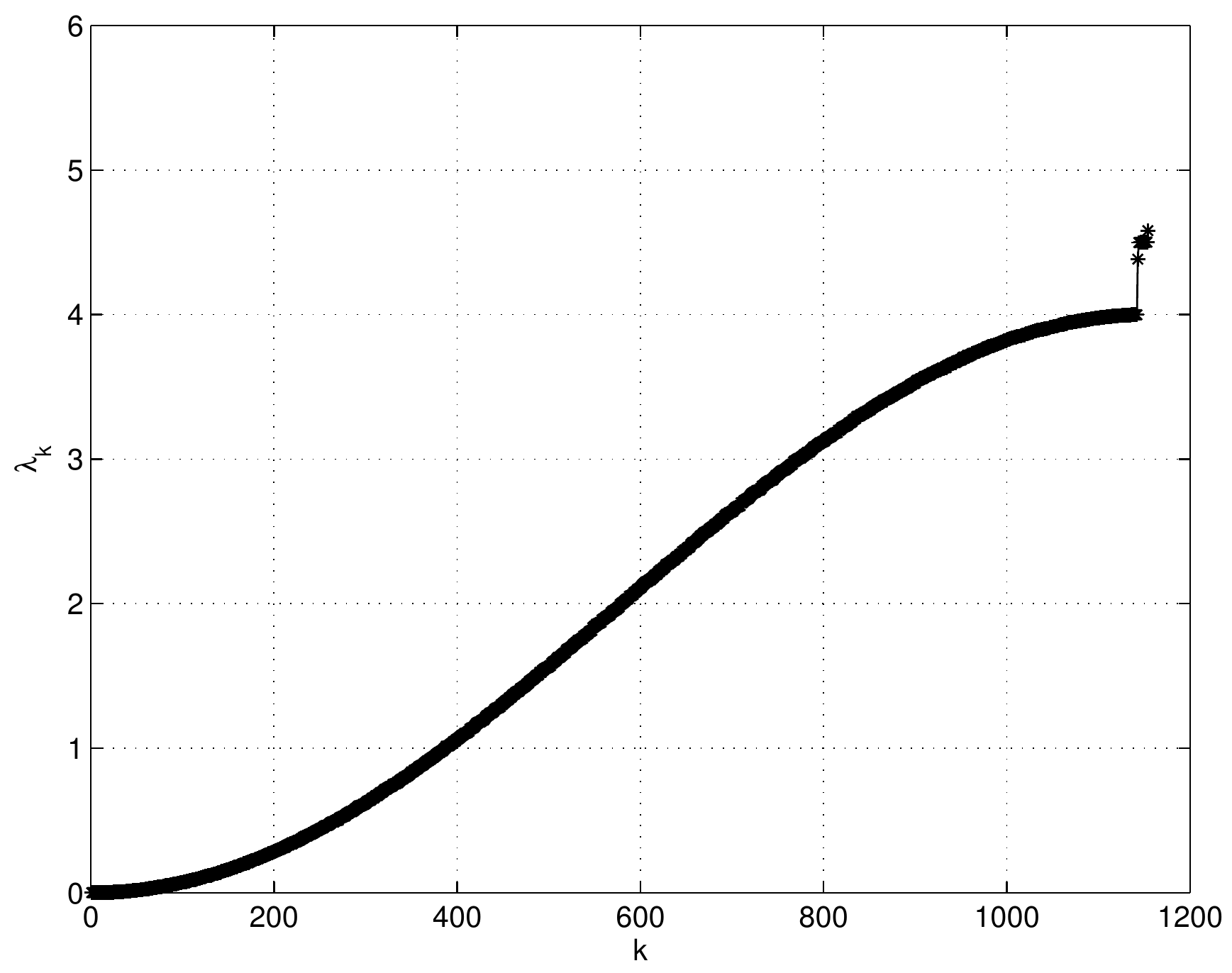}}
\end{center}
\caption{Typical dendrites of Retinal Ganglion Cells (RGCs) of a mouse
and the graph Laplacian eigenvalue distributions. 
(a) 2D projection of dendrites of an RGC of a mouse;
(b) that of another RGC revealing different morphology; 
(c) the eigenvalue distribution of the RGC shown in (a); 
(d) that of the RGC shown in (b).  
Regardless of their morphological features, a phase transition occurs at the 
eigenvalue 4.}
\label{fig:eigvaldist}
\end{figure}
The eigenvalue distribution for each dendritic tree
is a smooth bell-shaped curve starting from the eigenvalue 0 up to 4.
Then, at the eigenvalue 4, there is a sudden jump as shown in Figure~\ref{fig:eigvaldist}(c, d).  
Interestingly, the eigenvectors corresponding to the eigenvalues 
below 4 are \emph{semi-global} oscillations (like Fourier cosines/sines) over
the entire dendrites or one of the dendrite arbors (or branches); on the other
hand, those corresponding to the eigenvalues above 4 are much more 
\emph{localized} and \emph{concentrated} (like wavelets) around 
junctions/branching vertices, as shown in Figure~\ref{fig:eigvecs}.
\begin{figure}
\begin{center}
\renewcommand{\subfigtopskip}{0pt}
\renewcommand{\subfigbottomskip}{0pt}
\renewcommand{\subfigcapskip}{0pt}
\renewcommand{\subfigcapmargin}{0pt}
\renewcommand{\thesubfigure}{(a)}
\subfigure[RGC \#100; $\lambda_{1141}=3.9994$]{\includegraphics[width=0.49\textwidth]{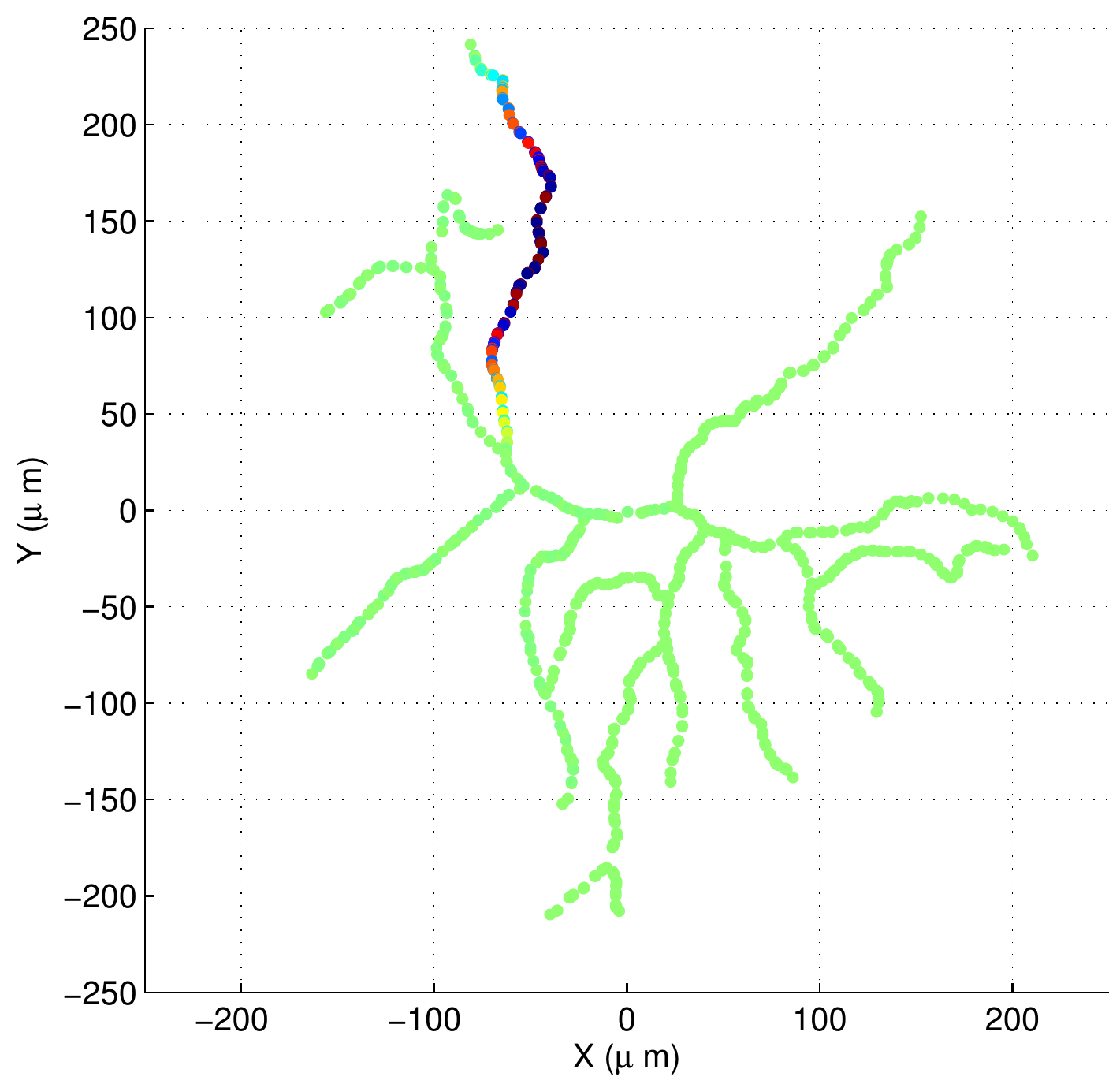}}
\renewcommand{\thesubfigure}{(b)}
\subfigure[RGC \#100; $\lambda_{1142}=4.3829$]{\includegraphics[width=0.49\textwidth]{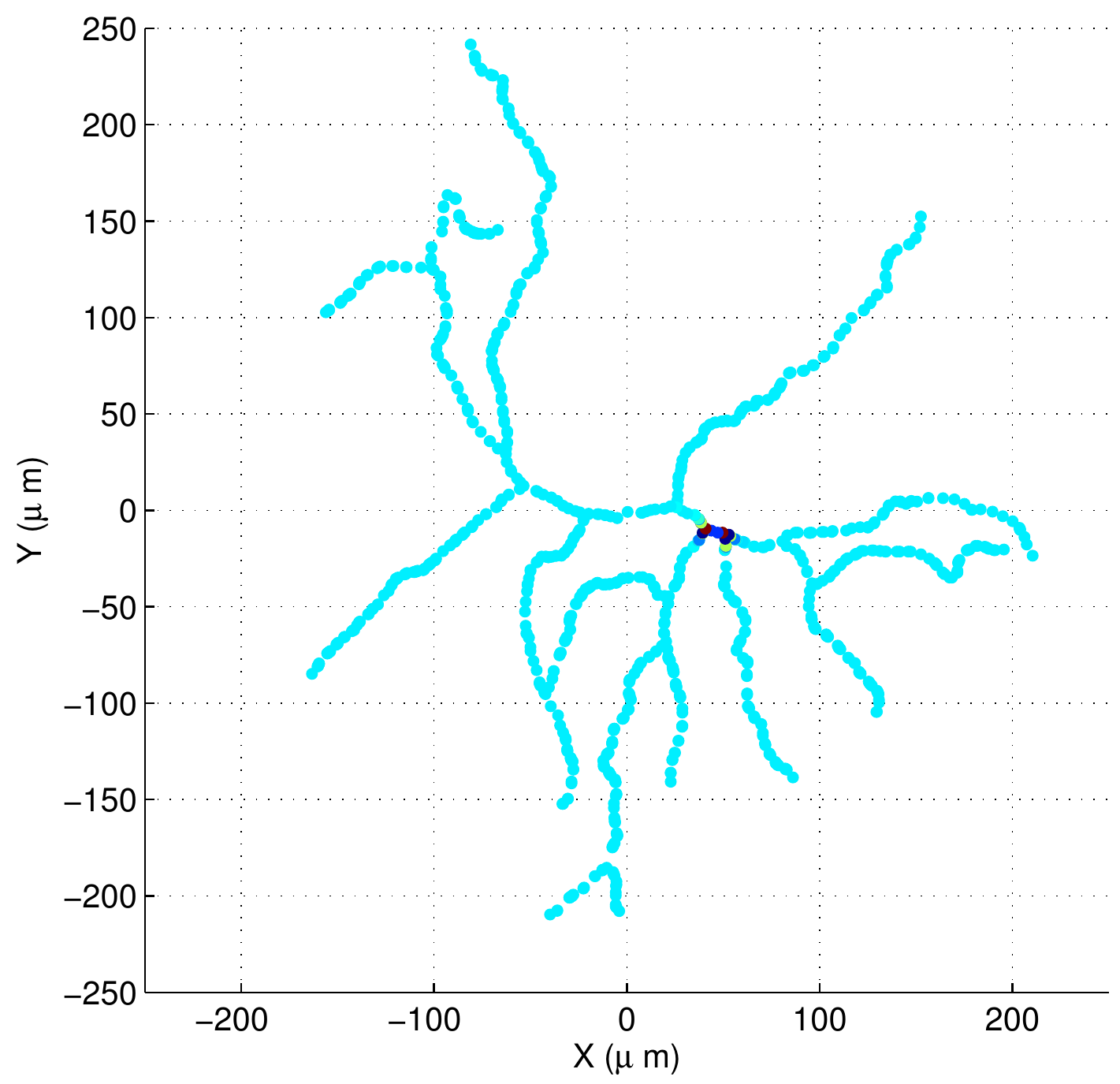}}
\end{center}
\caption{The graph Laplacian eigenvectors of RGC \#100.
(a) The one corresponding to the eigenvalue $\lambda_{1141}=3.9994$, immediately
below the value 4; (b) the one corresponding to the eigenvalue 
$\lambda_{1142}=4.3829$, immediately above the value 4.}
\label{fig:eigvecs}
\end{figure}

We want to answer the following questions:
\begin{description}
\item[Q1] Why does such a phase transition phenomenon occur?
\item[Q2] What is the significance of the eigenvalue 4?
\item[Q3] Is there any tree that possesses an eigenvalue exactly equal to 4?
\item[Q4] What about more general graphs that possess eigenvalues exactly equal to 4?
\end{description}
As for Q1 and Q2, which are closely related, we have a complete answer for 
a specific and simple class of trees called \emph{starlike trees} as described
in Section~\ref{sec:starlike}, and a partial answer for more general trees and
graphs such as those representing neuronal dendrites, which we discuss in 
Section~\ref{sec:genres}.
For Q3, we identify two classes of trees that have an eigenvalue exactly equal
to 4, which is necessarily a simple eigenvalue, in Section~\ref{sec:mK13}.

In Section~\ref{sec:eigconsequence}, we will prove that the existence of a long
path between two subgraphs implies that the eigenvalues of either of the 
subgraphs that are larger than 4 are actually very close to some eigenvalues of
the whole graph.
Then, in Section~\ref{sec:counterex}, we will give a counterexample to the
conjecture that the largest component in the eigenvector corresponding to 
the largest eigenvalue (which is larger than 4) lies on the vertex of the 
highest degree.
Finally, we describe our investigation on Q4 in Section~\ref{sec:disc}.
But let us first start by fixing our notation and reviewing the basics of
graph Laplacians in Section~\ref{sec:defs}.

\section{Definitions and Notation}
\label{sec:defs}
Let $G=(V,E)$ be a graph where $V = V(G) = \{ v_1, v_2, \ldots, v_n \}$ is a set
of vertices in $G$ and $E = E(G) = \{ e_1, e_2, \ldots, e_m \}$ is a set of edges
where $e_k$ connects two vertices $v_i, v_j$ for some $1 \leq i,j \leq n$, and
we write $e_k = (v_i, v_j)$.  Let $d_k = d(v_k)$ be the degree of the vertex 
$v_k$.  If a graph $G$ is  a \emph{tree}, i.e.,
a connected graph without cycles, then it has $m=n-1$ edges.
Let $L(G) := D(G)-A(G)$ be the \emph{Laplacian matrix} where 
$D(G) := \diag(d_1, \ldots, d_n)$ 
is called the \emph{degree matrix} of $G$, i.e., the diagonal matrix of vertex 
degrees, and $A(G) = (a_{ij})$ is the \emph{adjacency matrix} of $G$, i.e., 
$a_{ij} = 1$ if $v_i$ and $v_j$ are adjacent; otherwise it is 0.
Furthermore, 
let $0=\lambda_0(G) \leq \lambda_1(G) \leq \cdots \leq \lambda_{n-1}(G)$ 
be the eigenvalues of $L(G)$, and
 $m_G(\lambda)$ be the multiplicity of the eigenvalue $\lambda$.
More generally, if $I \subset \Rf$ is an interval of the real line,
then we define $m_G(I) := \# \{ \lambda_k (G) \in I \}$.

At this point we would like to give a simple yet important example of
a tree and its graph Laplacian: a \emph{path} graph $P_n$ consisting of 
$n$ vertices shown in Figure~\ref{fig:path}.
\begin{figure}
\begin{center}
\includegraphics[width=0.8\textwidth]{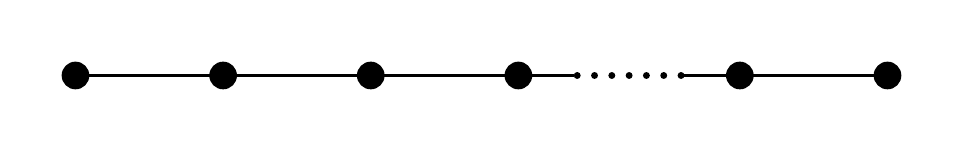}
\end{center}
\caption{A path graph $P_n$ provides a simple yet important example.}
\label{fig:path}
\end{figure}
The graph Laplacian of such a path graph can be easily obtained and
is instructive.  
\[
\underbrace{\begin{bmatrix}
 1 &     -1 &        &        &    \\
-1 &      2 &   -1   &        &    \\
   & \ddots & \ddots & \ddots &    \\
   &        &    -1  &    2   & -1 \\
   &        &        &   -1   &  1
\end{bmatrix}}_{L(G)} 
= \underbrace{\begin{bmatrix}
 1 &    &        &        &    \\
   &  2 &        &        &    \\
   &    & \ddots &        &    \\
   &    &        &    2   &    \\
   &    &        &        &  1
\end{bmatrix}}_{D(G)} 
-
\underbrace{\begin{bmatrix}
 0 &       1 &        &        &    \\
 1 &       0 &    1   &        &    \\
   &  \ddots & \ddots & \ddots &    \\
   &         &     1  &    0   &  1 \\
   &         &        &    1   &  0
\end{bmatrix}}_{A(G)} .
\]
The eigenvectors of this matrix are nothing but the \emph{DCT Type II} basis
vectors used for the JPEG image compression standard; 
see e.g., \cite{STRANG-DCT}.  In fact, 
we have
\begin{eqnarray}
\lambda_k &=& 4 \sin^2\(\frac{\pi k}{2n}\) ; \\ \label{eqn:1d-ew}
\phi_{j,k} &=&  \cos\(\frac{\pi k}{n}\(j-\half\)\) , \quad 
 j=1,\ldots,n
\label{eqn:1d-ev}
\end{eqnarray}
for $k=0, 1, \ldots, n-1$, where 
$\bphi_{k} = \(\phi_{1,k}, \cdots, \phi_{n,k}\)^\mathsf{T}$ is the 
eigenvector corresponding to $\lambda_k$. 
From these, it is clear that for any finite $n \in \N$, 
$\lambda_{\max} = \lambda_{n-1} < 4$,
and no localization/concentration occurs in the eigenvector $\bphi_{n-1}$ 
(or any eigenvector), which is simply a global oscillation with the highest
possible (i.e., the Nyquist) frequency, i.e.,
$\bphi_{n-1} = \( (-1)^{j-1} \sin\(\frac{\pi}{n}\(j-\half\)\) \)^{\mathsf{T}}_{1 \leq j \leq n}$. 

\section{Analysis of Starlike Trees}
\label{sec:starlike}
As one can imagine, analyzing this phase transition phenomenon for complicated
dendritic trees turns out to be rather formidable.  Hence, we start our analysis
on a simpler class of trees called \emph{starlike trees}.
A starlike tree is a tree that has exactly one vertex of degree 
higher than 2.  Examples are shown in Figure~\ref{fig:starlike}.

\begin{figure}
\begin{center}
\renewcommand{\subfigtopskip}{0pt}
\renewcommand{\subfigbottomskip}{0pt}
\renewcommand{\subfigcapskip}{0pt}
\renewcommand{\subfigcapmargin}{0pt}
\renewcommand{\thesubfigure}{(a)}
\subfigure[$S(2,2,1,1,1,1)$]{\includegraphics[width=0.25\textwidth]{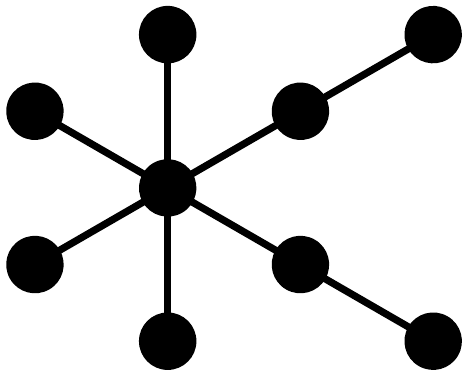}}
\hspace{2em}
\renewcommand{\thesubfigure}{(b)}
\subfigure[$S(n_1,1,1,1,1,1,1,1)$ a.k.a.\ comet]{\includegraphics[width=0.65\textwidth]{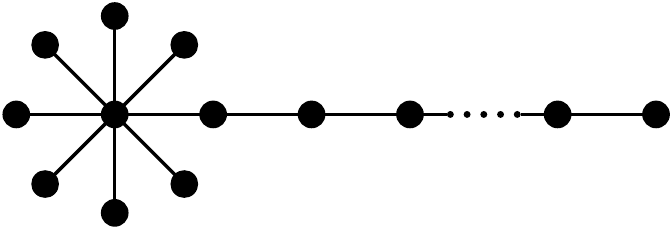}}
\end{center}
\caption{Typical examples of a starlike tree.}
\label{fig:starlike}
\end{figure}

We use the following notation.
Let $S(n_1, n_2, \dots, n_k)$ be a starlike tree that has $k (\geq 3)$ 
paths (i.e., branches) emanating from the central vertex $v_1$.
Let the $i$th branch have $n_i$ vertices excluding $v_1$.
Let $n_1 \geq n_2 \geq \cdots \geq n_k$.
Hence, the total number of vertices is $n=1 + \Sm{i=1}{k} n_i$.

Das proved the following results for a starlike tree $S(n_1, \ldots, n_k)$
in \cite{DAS}:
\bdm
\lambda_{\max} = \lambda_{n-1} < k+1+\frac{1}{k-1} ;
\edm
\begin{equation}
\label{eqn:2ndeigval}
2 + 2\cos\( \frac{2\pi}{2n_k+1}\) \leq \lambda_{n-2} \leq 2 + 2\cos\( \frac{2\pi}{2n_1+1}\) .
\end{equation}
On the other hand, Grone and Merris \cite{GRONE-MERRIS-2} proved the following 
lower bound for a general graph $G$ with at least one edge:
\begin{equation}
\label{eqn:lowerbound}
\lambda_{\max} \geq \max_{1 \leq j \leq n} d(v_j) + 1.
\end{equation}
Hence we have the following
\begin{corollary}
\label{cor:starlike}
A starlike tree has exactly one graph Laplacian
eigenvalue greater than or equal to 4.  The equality holds if and only if
the starlike tree is $K_{1,3}=S(1,1,1)$, which is also known as a \emph{claw}.
\end{corollary}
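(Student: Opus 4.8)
The plan is to prove the two assertions separately: the count (exactly one eigenvalue $\ge 4$) and the equality characterization. For the count I would simply combine the two quoted estimates. Since a starlike tree has maximum degree $\Delta = d(v_1) = k \ge 3$, the Grone--Merris bound \eqref{eqn:lowerbound} gives $\lambda_{\max} \ge k+1 \ge 4$, so at least one eigenvalue is $\ge 4$. For the opposite direction I would invoke Das's estimate \eqref{eqn:2ndeigval}: because $2n_1+1 \ge 3$ the cosine there is strictly below $1$, whence $\lambda_{n-2} \le 2 + 2\cos\big(2\pi/(2n_1+1)\big) < 4$. Thus $\lambda_{n-2} < 4 \le \lambda_{n-1}$, exactly one eigenvalue lies in $[4,\infty)$, and---crucially for the equality case---the value $4$ can be an eigenvalue only if it equals $\lambda_{\max}$.

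For the equality, the direction $S=K_{1,3} \Rightarrow \lambda_{\max}=4$ is immediate, since $K_{1,3}$ has Laplacian spectrum $\{0,1,1,4\}$. For the converse I would assume $L(S)\bx = 4\bx$ with $\bx \ne 0$ (equivalent to $\lambda_{\max}=4$ by the previous paragraph) and deduce that the tree is $K_{1,3}$. Writing $x_0$ for the value at the central vertex $v_1$ and $x_{i,1},\dots,x_{i,n_i}$ for the values along the $i$th branch, the eigenvalue equation at every interior (degree-$2$) vertex reads $x_{i,j+1} = -2x_{i,j} - x_{i,j-1}$. The decisive observation is that the associated characteristic polynomial $(r+1)^2$ has the \emph{double} root $r=-1$, so at $\lambda=4$ each branch carries a solution of the borderline form $x_{i,j} = (-1)^j(a_i + b_i j)$---neither the oscillation of the sub-$4$ regime nor the exponential decay of the super-$4$ regime, which is precisely the transition this paper studies.

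I would then close the system with the two boundary conditions. Reading the center value as the $j=0$ term forces $a_i = x_0$ for every branch, and the degree-$1$ equation at each leaf yields $b_i = -2x_0/(2n_i+1)$ (the short branch $n_i=1$, whose leaf is adjacent to the center, is checked to obey the same formula). Substituting into the center equation $\Sm{i=1}{k} x_{i,1} = (k-4)x_0$ and simplifying collapses everything to the single scalar identity $\Sm{i=1}{k} (2n_i+1)^{-1} = k-2$, the possibility $x_0=0$ being excluded because it forces $\bx=0$. Since each summand is at most $\tfrac13$, the left-hand side is at most $k/3$, so $k-2 \le k/3$ forces $k=3$; then $\sum_{i=1}^{3}(2n_i+1)^{-1}=1$ with each term $\le \tfrac13$ forces $n_1=n_2=n_3=1$, i.e.\ $S=K_{1,3}$. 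The same computation shows the eigenvector is determined up to the scalar $x_0$, so the eigenvalue $4$ is necessarily simple. I expect the main obstacle to be organizational rather than conceptual: once the double-root degeneracy is spotted, the real work is to encode the leaf and center relations correctly as boundary data for the degenerate recurrence---taking due care of the $n_i=1$ branches---and to verify that the algebra genuinely reduces to the clean constraint $\Sm{i=1}{k}(2n_i+1)^{-1}=k-2$.
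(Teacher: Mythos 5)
Your proof of the first assertion is exactly the paper's: the Grone--Merris bound \eqref{eqn:lowerbound} gives $\lambda_{n-1}\ge d(v_1)+1\ge 4$, and Das's estimate \eqref{eqn:2ndeigval} gives $\lambda_{n-2}<4$, so exactly one eigenvalue lies in $[4,\infty)$. For the equality characterization, however, you take a genuinely different and correct route. The paper argues via degree data: \eqref{eqn:lowerbound} forces $d(v_1)=3$, and then it invokes Das's 2004 characterization ($\lambda_{\max}(G)=d_1+d_2$ if and only if $G$ is a star) to conclude $d_2=1$ and hence $G=K_{1,3}$. You instead solve $L\bx=4\bx$ directly: at $\lambda=4$ the branch recurrence $x_{i,j+1}+2x_{i,j}+x_{i,j-1}=0$ has the degenerate characteristic polynomial $(r+1)^2$, so $x_{i,j}=(-1)^j(x_0+b_ij)$; the leaf condition gives $b_i=-2x_0/(2n_i+1)$, hence $x_{i,1}=-x_0(2n_i-1)/(2n_i+1)$, and the center equation $\sum_{i=1}^k x_{i,1}=(k-4)x_0$ collapses (after excluding $x_0=0$, which forces $\bx=0$) to $\sum_{i=1}^k(2n_i+1)^{-1}=k-2$; since each summand is at most $1/3$ this forces $k=3$ and then $n_1=n_2=n_3=1$. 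I verified this algebra, including the $n_i=1$ consistency check, and it is sound. As for what each approach buys: the paper's is shorter but outsources the crux to an external theorem, and its key step---``using this theorem, we must have $d_2=1$''---is quite terse, since for $d_2=2$ the quoted theorem alone yields no immediate contradiction with $\lambda_{\max}=4$. Your argument is self-contained, treats all starlike trees uniformly, fills in precisely the double-root case $\lambda=4$ that the paper's remark in Section~\ref{sec:starlike} explicitly sets aside, and yields the simplicity of the eigenvalue 4 as a free byproduct (which the paper obtains elsewhere by citing Grone--Merris--Sunder).
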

\begin{proof}
The first statement is easy to show.
The lower bound in \eqref{eqn:lowerbound} is larger than or equal to 4 
for any starlike tree since $\max_{1 \leq j \leq n} d(v_j) = d(v_1) \geq 3$.
On the other hand, the second largest eigenvalue $\lambda_{n-2}$ 
is clearly strictly smaller than 4 due to \eqref{eqn:2ndeigval}.

\begin{hide}
To prove the second statement about the necessary condition on the 
equality (the sufficiency is easily verified), first note that by 
\eqref{eqn:lowerbound}, $d(v_1)=3$ is a necessary condition for 
$\lambda_{\max}=4$. Then 
 the Laplacian matrix of the starlike tree can be written as
$
\begin{bmatrix}
  L_{11}&  L_{21}^{\mathsf{T}}\\
  L_{21}&  L_{22}
\end{bmatrix}$, where 
$
L_{11}=\mbox{\footnotesize $\begin{bmatrix}
  3&-1&-1&-1\\
  -1&1+a_1&0&0\\
  -1&0&1+a_2&0\\
  -1&0&0&1+a_3\\
\end{bmatrix}$}$ and
 $a_1,a_2,a_3$ are nonnegative integers ($L_{21},L_{22}$ are possibly empty). 
Let $v$ be the unit eigenvector of the claw $K_{1,3}$ (whose Laplacian is $L_{11}$ for the case $a_1=a_2=a_3=0$)
corresponding to the eigenvalue $4$. 
$v$ has the form $v=[\alpha,\beta,\beta,\beta]^{\mathsf{T}}$ where $\alpha<0$ and $\beta>0$. 
By the Courant-Fischer min-max characterization of 
eigenvalues~\cite[Theorem~8.1.2]{GOLUB-VANLOAN} 
we have 
\begin{align*}
\lambda_{\max}&\geq \lambda_{\max}(L_{11})\\
&\geq v^{\mathsf{T}}L_{11}v\\
&=[\alpha,\beta,\beta,\beta]\left( \mbox{\footnotesize $\begin{bmatrix}
  3&-1&-1&-1\\
  -1&1&0&0\\
  -1&0&1&0\\
  -1&0&0&1\\
\end{bmatrix}
+
\begin{bmatrix}
  0&0&0&0\\
  0&a_1&0&0\\
  0&0&a_2&0\\
  0&0&0&a_3\\
\end{bmatrix}$}\right)
\begin{bmatrix}
\alpha\\\beta\\\beta\\\beta  
\end{bmatrix}\\
&=4+\beta^2(a_1+a_2+a_3)\\
&\geq 4, 
\end{align*}
with the last equality holding if and only if 
$a_1=a_2=a_3=0$, that is, when 
the starlike tree is the claw $K_{1,3}$. 
\end{hide}

To prove the second statement about the necessary condition on the 
equality (the sufficiency is easily verified), first note that by 
\eqref{eqn:lowerbound}, $d(v_1) = 3$ is a necessary condition for 
$\lambda_{\max}=4$.  Let $d_1$ denote the highest degree of such a starlike tree,
i.e., $d_1=d(v_1)$.  Since we only consider starlike trees,
the second highest degree $d_2$ must be either 2 or 1.
Now, we use the following
\begin{theorem}[Das 2004, \cite{DAS-UPBDS}]
Let $G = (V, E)$ be a connected graph and $d_1 \neq d_2$ where
$d_1$ and $d_2$ are the highest and the second highest degree, respectively.
Then, $\lambda_{\max}(G) = d_1 + d_2$ if and only if $G$ is a star graph.
\end{theorem}
If $\lambda_{\max}(G) = 4$ and $d_1=3$, then using this theorem, we must have 
$d_2=1$.  Hence, $G$ must be a star graph with $d_1=3$ and $d_2=1$, i.e., $G=K_{1,3}$.
\end{proof}

As for the concentration/localization of the eigenvector $\bphi_{n-1}$
corresponding to the largest eigenvalue $\lambda_{n-1}$, 
we prove the following 
\begin{theorem}
\label{thm:concentration}
Let $\bphi_{n-1} = \(\phi_{1,n-1}, \cdots, \phi_{n,n-1}\)^\mathsf{T}$,
where $\phi_{j,n-1}$ is the value of the eigenvector corresponding to
the largest eigenvalue $\lambda_{n-1}$ at the vertex $v_j$,
$j=1, \ldots, n$.  Then, the absolute value of this eigenvector at
the central vertex $v_1$ cannot be exceeded by those at the
other vertices, i.e.,
\bdm
| \phi_{1,n-1} | > | \phi_{j,n-1} | , \quad j=2, \ldots, n .
\edm
\end{theorem}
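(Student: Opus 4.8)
The plan is to read the eigenvalue equation $L(S)\,\bphi_{n-1}=\lambda_{n-1}\bphi_{n-1}$ componentwise and analyze it one branch at a time. By Corollary~\ref{cor:starlike} we know $\lambda_{n-1}\ge 4$, so the central quantity will be $\mu:=\lambda_{n-1}-2\ge 2$. For a fixed branch $i$, label its vertices $u_{i,1},\dots,u_{i,n_i}$ with $u_{i,1}$ adjacent to the central vertex $v_1$ and $u_{i,n_i}$ the leaf, write $x_j$ for the component of $\bphi_{n-1}$ at $u_{i,j}$ ($1\le j\le n_i$), and set $x_0:=\phi_{1,n-1}$. Reading off the eigenvalue equation, at an interior (degree-$2$) vertex one gets $x_{j-1}+x_{j+1}=-\mu\,x_j$, while at the leaf (degree $1$) one gets $x_{n_i-1}=(1-\lambda_{n-1})\,x_{n_i}$. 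The asserted inequality is exactly $|x_0|>|x_j|$ for every branch and every $j\ge 1$, so it suffices to show that along each branch the magnitudes strictly increase as one walks from the leaf inward to $v_1$.

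First I would establish this monotonicity by a finite induction running from the leaf down toward $v_1$. The base case is the leaf relation: since $\lambda_{n-1}\ge 4$, the factor $1-\lambda_{n-1}\le -3$ shows that $x_{n_i-1}$ and $x_{n_i}$ have opposite signs and $|x_{n_i-1}|\ge 3|x_{n_i}|>|x_{n_i}|$. For the inductive step, assume $x_j$ and $x_{j+1}$ have opposite signs with $|x_j|>|x_{j+1}|$; the interior relation $x_{j-1}=-\mu x_j-x_{j+1}$ then forces (taking $x_j>0>x_{j+1}$ without loss of generality and using $\mu\ge 2$) that $x_{j-1}=-\mu x_j+|x_{j+1}|<0$ and $|x_{j-1}|=\mu|x_j|-|x_{j+1}|>2|x_j|-|x_j|=|x_j|$. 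Thus the alternating-sign, strictly-increasing-magnitude pattern propagates, and the last step (the relation at $u_{i,1}$, or the leaf relation directly when $n_i=1$) yields $|x_0|>|x_{i,1}|$. Note that this argument uses only $\lambda_{n-1}\ge 4$, so the claw $K_{1,3}$, for which $\lambda_{n-1}=4$ exactly, needs no separate treatment.

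The one assumption to justify is that the leaf value $x_{n_i}$ is nonzero, which is where the induction starts. The plan is a short contradiction argument. If $x_{n_i}=0$ on some branch, the leaf relation gives $x_{n_i-1}=0$ and the two-term recursion propagates zeros down the branch, forcing $x_0=\phi_{1,n-1}=0$. Suppose then $\phi_{1,n-1}=0$. On each branch we face a dichotomy: either its leaf value vanishes---whence, as above, the whole branch is zero---or its leaf value is nonzero, in which case the monotonicity just established gives $|\phi_{1,n-1}|=|x_0|>|x_{n_i}|>0$, contradicting $\phi_{1,n-1}=0$. Hence every branch would be identically zero, i.e.\ $\bphi_{n-1}=0$, which is impossible for an eigenvector. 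Therefore $\phi_{1,n-1}\neq 0$, every leaf value is nonzero, the induction applies on every branch, and all the inequalities are strict.

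I expect the main obstacle to be bookkeeping rather than conceptual: getting the leaf relation right (its coefficient $1-\lambda_{n-1}$ is precisely what makes the magnitudes grow inward) and organizing the vanishing-value argument so that it rules out zeros on all branches simultaneously. The conceptual content---that $\lambda_{n-1}\ge 4$ makes the recurrence $x_{j+1}=-\mu x_j-x_{j-1}$ behave hyperbolically rather than oscillating, forcing the peak at the branch point---is exactly the localization mechanism described in the introduction, and could alternatively be made explicit by solving the recurrence in closed form as $x_j=(-1)^j C_i\,\sinh\bigl((n_i+\tfrac12-j)\ln\rho\bigr)$ with $\rho=\tfrac{\mu+\sqrt{\mu^2-4}}{2}\ge 1$. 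I would nevertheless keep the elementary induction as the primary route, since it avoids the degenerate limit $\rho\to 1$ at the claw and handles the boundary cases uniformly.
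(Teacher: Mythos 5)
Your proof is correct, but it takes a genuinely different route from the paper's. The paper deduces the theorem from Gerschgorin's theorem via Lemma~\ref{lem:gerschgorin-cor}: any index at which the eigenvector attains its maximum absolute value must have its Gerschgorin disk contain the eigenvalue; the disks of degree-1 and degree-2 vertices are contained in $[0,2]$ and $[0,4]$ respectively, so no such vertex can carry the maximum when $\lambda_{n-1}>4$, leaving only the central vertex; the remaining case $\lambda_{n-1}=4$, which by Corollary~\ref{cor:starlike} forces $K_{1,3}$, is settled by inspecting $(3,-1,-1,-1)^{\mathsf{T}}$ directly. You instead read the eigenvalue equation along each branch and prove, by induction from the leaf inward, strict alternating-sign growth of the magnitudes, preceded by a zero-propagation argument that rules out vanishing leaf components; I checked the base case, the inductive step (where $\mu=\lambda_{n-1}-2\ge 2$ is exactly what is needed), and the dichotomy handling zeros, and the argument is complete, including the $n_i=1$ branches. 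What each approach buys is different. The Gerschgorin route is shorter and generalizes verbatim beyond starlike trees---it is precisely how the paper obtains the localization statement in Theorem~\ref{thm:ew4ltdeg2} for arbitrary graphs, which is why the paper calls it ``more powerful'' than recurrence-based proofs (it cites Das and Woei for proofs in your spirit). Your route is more elementary and yields strictly more information for the starlike case: the full strict monotone decay of $|\phi_j|$ along every branch, which anticipates the paper's own Theorem~\ref{thm:expdecay} and Remark~\ref{rem:ew4} (whose proofs use essentially your leaf-to-center induction), and it treats $\lambda_{n-1}=4$ uniformly with $\lambda_{n-1}>4$, requiring only the bound $\lambda_{n-1}\ge 4$ from Corollary~\ref{cor:starlike} rather than the classification of the equality case as the claw.
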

To prove this theorem, we use the following lemma, which is simply a corollary
of Gerschgorin's theorem~\cite[Theorem~1.1]{VARGA}:
\begin{lemma}
\label{lem:gerschgorin-cor}
Let $A$ be a square matrix of size $n \times n$, 
$\lambda_k(A)$ be any eigenvalue of $A$, and 
$\bphi_k = (\phi_{1,k}, \ldots, \phi_{n,k})^{\mathsf{T}}$ be the corresponding
eigenvector.  Let $k^*$ denote the index of the largest eigenvector component
in $\bphi_k$, i.e.,
$\left| \phi_{k^*,k} \right| = \max_{j \in N} \left| \phi_{j,k} \right|$ where
$N := \{1,\ldots,n\}$.
Then, we must have $\lambda_k(A)\in \Gamma_{k^*}(A)$, where  
$\Gamma_i (A) := \left\{z \in \C: \left|z-a_{ii}\right| \leq \sum_{j \in N \setminus \{i\}}|a_{ij}| \right\}$ is the $i$th Gerschgorin disk of $A$.
In other words, for the index of the largest eigenvector component,
the corresponding Gerschgorin disk must contain the eigenvalue.
\end{lemma}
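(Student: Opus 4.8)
The plan is to reproduce the standard proof of Gerschgorin's theorem itself, but to keep careful track of \emph{which} disk the eigenvalue lands in rather than merely asserting membership in the union of all disks. First I would write the eigenvalue equation $A\bphi_k = \lambda_k(A)\,\bphi_k$ component-wise. For each row index $i \in N$ this reads
\[
\sum_{j \in N} a_{ij}\,\phi_{j,k} = \lambda_k(A)\,\phi_{i,k},
\]
and, separating out the diagonal term, we obtain
\[
\left(\lambda_k(A) - a_{ii}\right)\phi_{i,k} = \sum_{j \in N \setminus \{i\}} a_{ij}\,\phi_{j,k}.
\]

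Next I would specialize this identity to the distinguished index $i = k^*$, that of the component of largest magnitude. The crucial observation is that, since $\bphi_k$ is an eigenvector and is therefore nonzero by definition, its maximal-magnitude entry cannot vanish; hence $\phi_{k^*,k} \neq 0$ and we may divide through by it. This yields
\[
\lambda_k(A) - a_{k^*k^*} = \sum_{j \in N \setminus \{k^*\}} a_{k^*j}\,\frac{\phi_{j,k}}{\phi_{k^*,k}}.
\]
I would then apply the triangle inequality and bound each ratio using the maximality of $\left|\phi_{k^*,k}\right|$, namely $\left|\phi_{j,k}\right| / \left|\phi_{k^*,k}\right| \leq 1$ for every $j$, to conclude
\[
\left|\lambda_k(A) - a_{k^*k^*}\right| \leq \sum_{j \in N \setminus \{k^*\}} \left|a_{k^*j}\right| \frac{\left|\phi_{j,k}\right|}{\left|\phi_{k^*,k}\right|} \leq \sum_{j \in N \setminus \{k^*\}} \left|a_{k^*j}\right|,
\]
which is precisely the statement that $\lambda_k(A) \in \Gamma_{k^*}(A)$.

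The only point requiring genuine care is the nonvanishing of $\phi_{k^*,k}$, which is what licenses the division and the subsequent estimate; everything else is the triangle inequality together with the defining property of $k^*$. In this sense there is no real obstacle: the argument is essentially the content of the usual proof of Gerschgorin's theorem, and the present statement simply records the byproduct that the relevant disk is the one indexed by the largest eigenvector component rather than the union over all $i$.
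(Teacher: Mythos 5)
Your proof is correct and is essentially identical to the paper's own argument: both run the standard Gerschgorin proof on the row indexed by the largest-magnitude eigenvector component $k^*$, divide by $\phi_{k^*,k}$ (nonzero since an eigenvector is nonzero), and apply the triangle inequality with the bound $\left|\phi_{j,k}\right|/\left|\phi_{k^*,k}\right| \leq 1$. You have merely spelled out the steps the paper compresses into a single displayed inequality.
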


\begin{proof}
Recall the proof of Gerschgorin's theorem. 
The $k^*$th row of $A\bphi_k=\lambda _k \bphi_k$ yields 
\bdm
\left|\lambda _k -a_{k^* k^*}\right| 
\leq \sum_{j \in N \setminus \{k^*\}}
\left|a_{k^* j}\right| \frac{\left|\phi_{j,k}\right|}{\left|\phi_{k^*,k}\right|}\,\leq \sum_{j \in N \setminus \{k^*\}}\left|a_{k^*j}\right|. 
\label{base}
\edm
This implies $\lambda_k \in \Gamma_{k^*}(A)$, which proves the lemma.
\end{proof}

\begin{proof}[Proof of Theorem~\ref{thm:concentration}]
First of all, by Corollary~\ref{cor:starlike} we have $\lambda_{n-1} \geq 4$.
However, $\lambda_{n-1}=4$ happens only for $K_{1,3}$. In that case,
it is easy to see that this theorem holds by directly examining the eigenvector 
$\bphi_{n-1} = \bphi_{3} \propto (3, -1, -1, -1)^{\mathsf{T}}$.
Hence, let us examine the case $\lambda_{n-1} > 4$.  In this case, 
Lemma~\ref{lem:gerschgorin-cor} indicates 
$4 < \lambda_{n-1} \in \Gamma_{(n-1)^*}(L)$ where 
$(n-1)^* \in N$ is the index of the largest component in $\bphi_{n-1}$.
Now, note that the disk $\Gamma_{i}(L)$ for any vertex $v_i$ that has degree 
2 is $\{z \in \C: |z-2| \leq 2 \}$ (and $\{ z \in \C: |z-1| \leq 1\}$ for a degree 1 vertex). 
This means that the Gerschgorin disk $\Gamma_{(n-1)^*}$ containing 
the eigenvalue $\lambda_{n-1}>4$ cannot be in the union of the Gerschgorin disks
corresponding to the vertices whose degrees are 2 or lower.  
Hence the index of the largest eigenvector component in $\bphi_{n-1}$ must 
correspond to an index for which the vertex has degree 3 or higher.
In our starlike-tree case, there is only one such vertex, $v_1$, i.e.,
$(n-1)^*=1$.
\end{proof}

For different proofs without using Gerschgorin's theorem,
see Das \cite[Lemma~4.2]{DAS} and E.\ Woei's dissertation~\cite{WOEI-PHD}.
We note that our proof using Gerschgorin's disks is more powerful than
those other proofs and can be used for more general situations than
the starlike trees as we will see in Section~\ref{sec:genres}.

\begin{remark}
Let $\bphi=(\phi_1, \phi_2, \dots, \phi_n)^{\mathsf T}$ be an eigenvector of 
a starlike tree $S(n_1, \ldots, n_k)$ corresponding to the 
Laplacian eigenvalue $\lambda$.
Without loss of generality, 
let $v_2, \dots, v_{n_1+1}$ be the $n_1$ vertices along a branch emanating from 
the central vertex $v_1$ with $v_{n_1+1}$ being the leaf (or pendant) vertex.
Then, along this branch, the eigenvector components satisfy the following
equations:
\begin{eqnarray}
\lambda \phi_{n_1+1} &=& \phi_{n_1+1} - \phi_{n_1}, \label{eqn:leaf}\\
\lambda \phi_{j} &=& 2\phi_{j} - \phi_{j-1} - \phi_{j+1} 
\quad 2 \leq j \leq n_1. \label{eqn:middle}
\end{eqnarray}
From Eq.~\eqref{eqn:middle}, we have the following recursion relation:
\bdm
\label{eqn:recur}
\phi_{j+1} + (\lambda-2) \phi_{j} + \phi_{j-1} = 0, \quad
j = 2, \dots, n_1.
\edm
This recursion can be explicitly solved using the roots of the
characteristic equation
\begin{equation}
\label{eqn:char}
r^2 + (\lambda-2)r + 1 = 0,
\end{equation}
and when \eqref{eqn:char} has distinct roots $r_1, r_2$, 
the general solution can be written as
\begin{equation}
\label{eqn:gensol}
\phi_j = A r_1^{j-2} + B r_2^{j-2}, \quad j=2, \dots, n_1+1,
\end{equation}
where $A, B$ are appropriate
constants derived from the boundary condition \eqref{eqn:leaf}.
Now, let us consider these roots of \eqref{eqn:char} in detail.
The discriminant of \eqref{eqn:char} is
\bdm
\label{eqn:det}
\D(\lambda) := (\lambda-2)^2 - 4 = \lambda (\lambda - 4) .
\edm
Since we know that $\lambda \geq 0$, this discriminant changes its
sign depending on $\lambda < 4$ or $\lambda > 4$.
(Note that $\lambda = 4$ occurs only for the claw $K_{1,3}$ 
on which we explicitly know everything; 
hence we will not discuss this case further in this remark.)
If $\lambda < 4$, then $\D(\lambda) < 0$ and it is easy to show that
the roots are complex valued with magnitude 1.
This implies that \eqref{eqn:gensol} becomes
\bdm
\label{eqn:solless4}
\phi_j = A' \cos(\omega(j-2)) + B' \sin(\omega(j-2)), \quad j=2, \dots, n_1+1,
\edm
where $\omega$ satisfies $\tan \omega = \sqrt{\lambda (4-\lambda)}/(2-\lambda)$,
and $A', B'$ are appropriate constants.
In other words, if $\lambda < 4$, the eigenvector along this branch is
of oscillatory nature.
On the other hand, if $\lambda > 4$, then $\D(\lambda) > 0$ and it is easy to 
show that both $r_1$ and $r_2$ are real valued with $-1 < r_1 = \(2-\lambda+\sqrt{\lambda(\lambda-4)}\)/2 < 0$ while $r_2 = \(2-\lambda-\sqrt{\lambda(\lambda-4)}\)/2< -1$.
On the surface, the term $Br_2^{j-2}$ looks like a 
dominating part in \eqref{eqn:gensol}; however, we see from \eqref{eqn:leaf} 
that $|\phi_{n_1}|>|\phi_{n_1+1}|$, which means the real dominating part 
in \eqref{eqn:gensol} for $j=2,\ldots,n_1+1$ is the term $Ar_1^{j-2}$.
Hence we conclude that $|\phi_j|$ decays exponentially with $j$, that is, 
the eigenvector component decays rapidly towards the leaves. 
The siuation is the same for the other branches.
\end{remark}

In summary, we have shown that a starlike tree has only one eigenvalue
$\geq 4$, and its eigenvector is localized at the central vertex 
in the sense of Theorem 3.1. Furthermore,
 the other eigenvectors are of oscillatory nature. 
Therefore the phase transition phenomenon for a starlike tree
(with the eigenvalue 4 as its threshold) is completely understood.

\section{The Localization Phenomena on General Graphs}
\label{sec:genres}
Unfortunately, actual dendritic trees are not exactly starlike.
However, our numerical computations and data analysis on totally 179 RGCs
indicate that:
\bdm
\label{eqn:starlike-stat}
0 \leq \frac{ \#\{j \in N \cond d(v_j) > 2\} - m_G([4,\infty))  }{n}
\leq 0.047
\edm
for each RGC.
Hence, we can define the \emph{starlikeliness} $S\ell(T)$ of a given tree $T$ as
\bdm
\label{eqn:starlikeliness}
S\ell(T) := 
1 - \frac{ \#\{j \in N \cond d(v_j) > 2\} - m_T([4,\infty))  }{n}
\edm
We note that $S\ell(T) \equiv 1$ for a certain class of RGCs whose dendrites 
are sparsely spread (see \cite{SAITO-WOEI-DENDRITES} for the 
characterization).  This means that dendrites in that class are all close to 
a starlike tree or a concatenation of several starlike trees.
We show some examples of dendritic trees with 
$S\ell(T) \equiv 1$ and with $S\ell(T) < 1$ in Figure~\ref{fig:zoom}.
\begin{figure}
\begin{center}
\renewcommand{\subfigtopskip}{0pt}
\renewcommand{\subfigbottomskip}{0pt}
\renewcommand{\subfigcapskip}{0pt}
\renewcommand{\subfigcapmargin}{0pt}
\renewcommand{\thesubfigure}{(a)}
\subfigure[RGC \#100; $S\ell(T) \equiv 1$]{\includegraphics[width=0.49\textwidth]{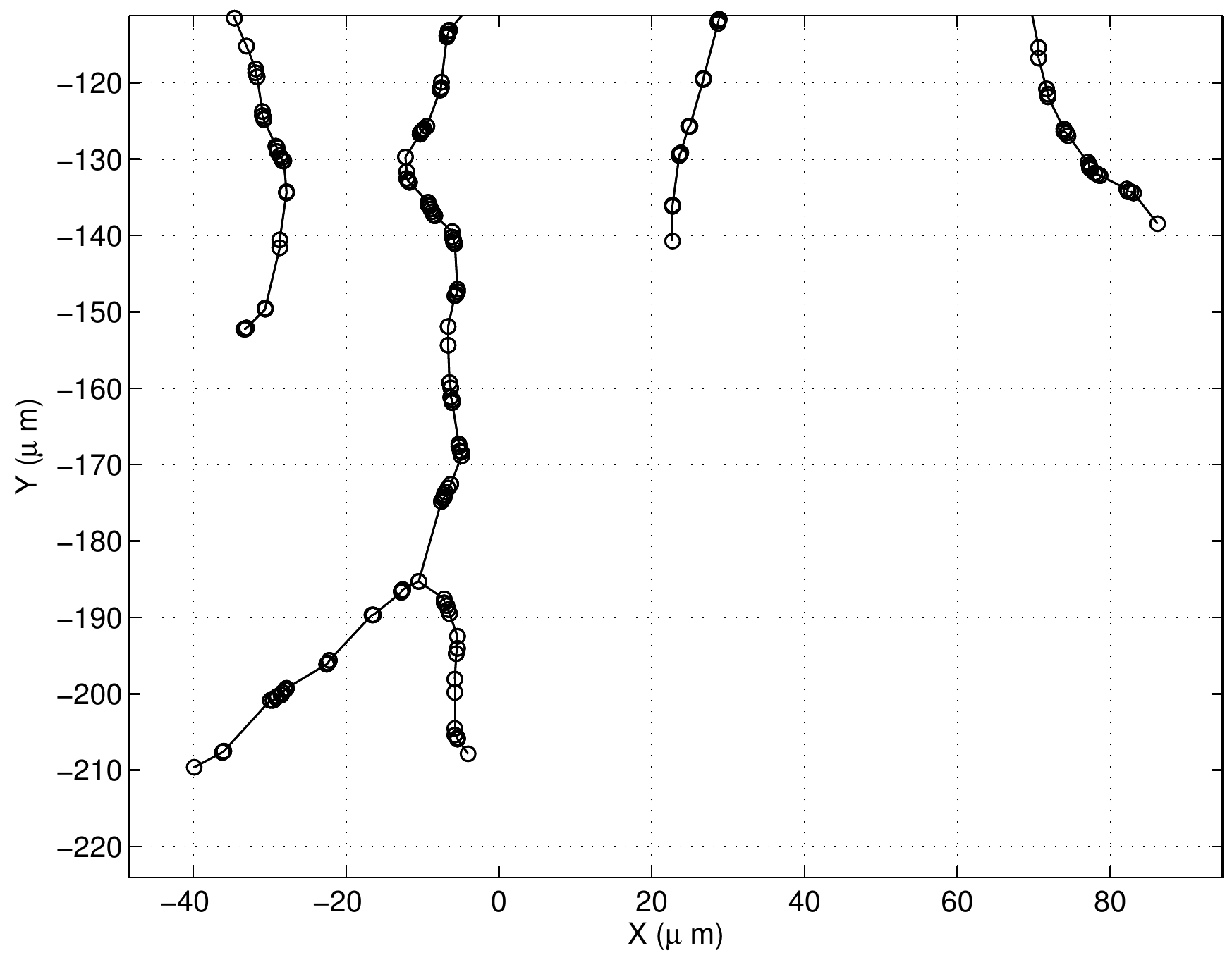}}
\renewcommand{\thesubfigure}{(b)}
\subfigure[RGC \#155; $S\ell(T)=0.953 < 1$]{\includegraphics[width=0.49\textwidth]{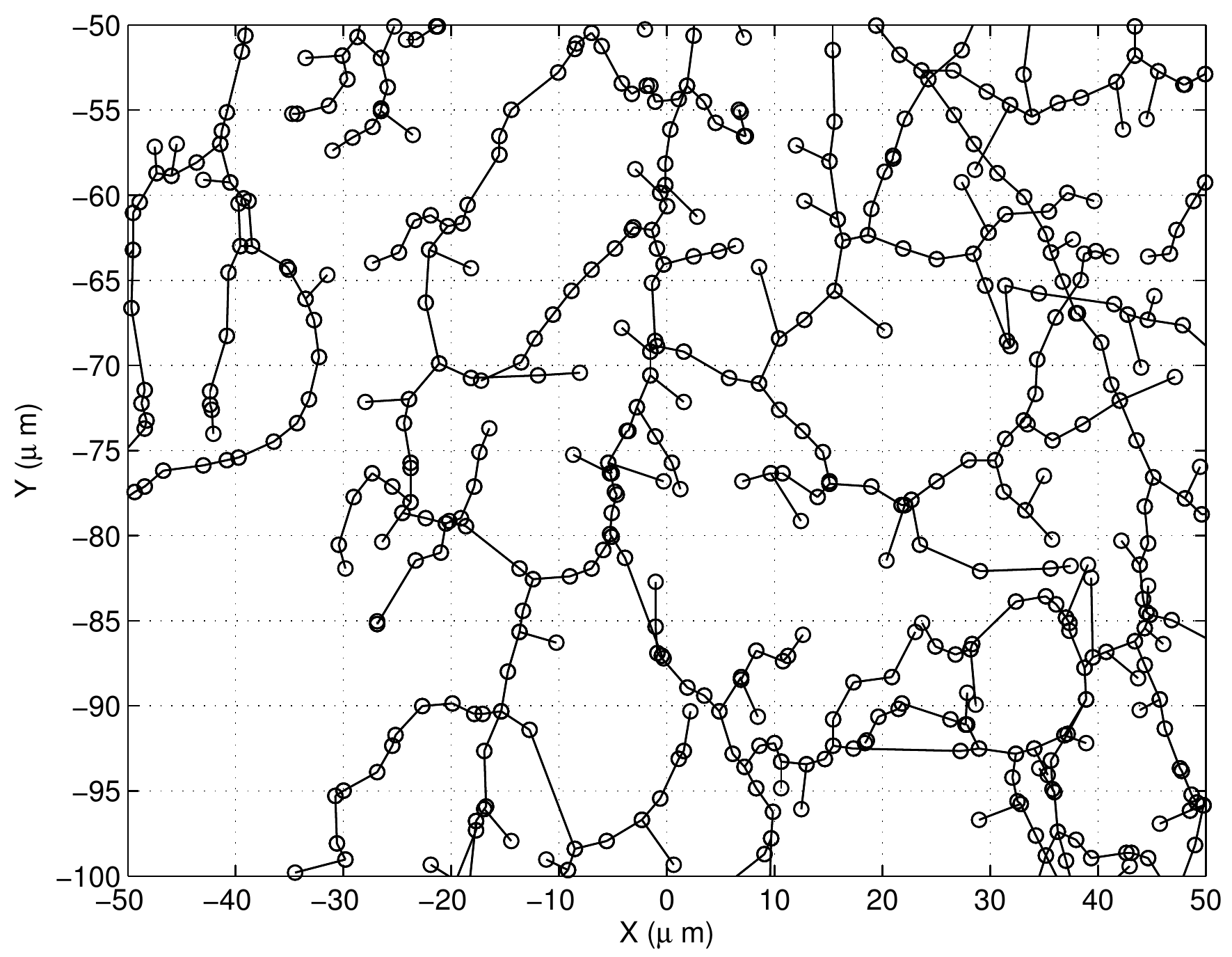}}
\end{center}
\caption{Zoomed-up versions of parts of some dendritic trees.}
\label{fig:zoom}
\end{figure}

The above observation has led us to prove the following
\begin{theorem}
\label{thm:ew4ltdeg2}
For any graph $G$ of finite volume, i.e., $\sum_{j=1}^n d(v_j) < \infty$,
we have
\bdm
0 \leq  m_G([4,\infty))  \leq \#\{j \in N \cond d(v_j) > 2\} 
\edm
and each eigenvector corresponding to $\lambda \geq 4$ has its
largest component (in absolute value) on the vertex 
whose degree is higher than 2.  
\end{theorem}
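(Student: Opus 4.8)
The plan is to prove the two assertions separately, using Gerschgorin's theorem (Lemma~\ref{lem:gerschgorin-cor}) as the engine in both. Write $S := \{ v_j : d(v_j) > 2 \}$ for the set of branching vertices, $p := \# S$, and order the eigenvalues of $L = L(G)$ as $\mu_1 \le \cdots \le \mu_n$. For the counting bound I would delete from $L$ the $p$ rows and columns indexed by $S$, forming the $(n-p) \times (n-p)$ principal submatrix $B := L[V \setminus S]$, and invoke the Cauchy interlacing theorem \cite{GOLUB-VANLOAN}: if $\beta_1 \le \cdots \le \beta_{n-p}$ are the eigenvalues of $B$, then $\mu_k \le \beta_k \le \mu_{k+p}$ for every admissible $k$. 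In particular $\mu_{n-p} \le \beta_{n-p} = \lambda_{\max}(B)$, so it suffices to establish $\lambda_{\max}(B) \le 4$: this forces $\mu_{n-p} \le 4$, and hence at most the top $p$ eigenvalues $\mu_{n-p+1}, \ldots, \mu_n$ can reach or exceed $4$, giving $m_G([4,\infty)) \le p$.

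The estimate $\lambda_{\max}(B) \le 4$ is where Gerschgorin enters. Every diagonal entry of $B$ is the original degree $d_j$ of some vertex $v_j \in V \setminus S$, hence $d_j \le 2$; and the off-diagonal absolute row sum of $B$ at that row counts only the neighbours of $v_j$ that also lie in $V \setminus S$, which is at most $d_j \le 2$. Thus every Gerschgorin disk of $B$ is contained in $\{ z \in \C : |z - d_j| \le d_j \}$ with $d_j \le 2$, whose rightmost real point is $2 d_j \le 4$. Since $B$ is symmetric its spectrum is real and lies in the union of these disks, so $\lambda_{\max}(B) \le 4$, as required.

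The localization statement then repeats the argument of Theorem~\ref{thm:concentration} essentially verbatim. If $\bphi$ is an eigenvector for some $\lambda > 4$ and $k^{*}$ indexes its largest-magnitude component, Lemma~\ref{lem:gerschgorin-cor} gives $\lambda \in \Gamma_{k^{*}}(L)$; but the Gerschgorin disk of a degree-$2$ vertex is $\{ |z - 2| \le 2 \}$ (and that of a degree-$1$ vertex is the smaller $\{ |z - 1| \le 1 \}$), neither of which contains any point strictly to the right of $4$. Hence $k^{*}$ must index a vertex of degree $> 2$, which is the claim.

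The main obstacle is the boundary value $\lambda = 4$, which both the interlacing step and the localization step control only up to the closed disk. For $\lambda > 4$ every inequality above is strict and the conclusions are clean. At $\lambda = 4$ the degree-$2$ disk $\{|z-2| \le 2\}$ touches $4$ on its boundary, so to obtain the stated closed-interval bound I need $\lambda_{\max}(B) < 4$ strictly. I would recover this for trees by observing that deleting $S$ leaves a forest, so $B$ is a block-diagonal sum of tridiagonal path blocks; each such block is a path-type submatrix whose eigenvalues have the form $2 - 2\cos\!\big(\ell\pi/(\ell+1)\big) < 4$ in the spirit of \eqref{eqn:1d-ew}, hence strictly below $4$, and similarly any eigenvector for $\lambda = 4$ maximal at a degree-$2$ vertex would, by the equality case of Gerschgorin, force a chain of equal-magnitude sign-alternating neighbours terminating at a leaf whose disk reaches only $2$, a contradiction. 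The genuinely delicate case for \emph{general} graphs is that deleting $S$ may leave a cycle among the degree-$\le 2$ vertices, and an even cycle (for example $C_4$) contributes the eigenvalue $4$ exactly; this is precisely the configuration in which the Gerschgorin equality analysis, rather than Gerschgorin itself, must be carried out to decide the endpoint.
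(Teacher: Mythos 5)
Your decomposition is in substance identical to the paper's: your $B$ is precisely the paper's submatrix $L_2$ (the principal submatrix of $L$ on the vertices of degree at most $2$), your Gerschgorin estimate $\lambda_{\max}(B)\le 4$ is the paper's first step, and your Cauchy interlacing inequality $\mu_{n-p}\le\beta_{n-p}$ is exactly the paper's Courant--Fischer argument $\lambda_{\ell}(P^{\mathsf{T}}LP)\le\lambda_{\max}(L_2)$ with $\ell=n-p$, in equivalent packaging. For eigenvalues strictly greater than $4$ both halves of your argument are complete and correct. The genuine gap is the one you flag yourself: the theorem is stated for the closed interval $[4,\infty)$ and for eigenvectors with $\lambda\ge 4$, so the counting step needs $\lambda_{\max}(B)<4$ strictly (as written, ``$\mu_{n-p}\le 4$ hence $m_G([4,\infty))\le p$'' is a non sequitur, since $\mu_{n-p}=4$ would give $p+1$ eigenvalues in $[4,\infty)$), and the localization step must cope with $\lambda=4$, which lies \emph{on} the boundary of the degree-$2$ disk $\{|z-2|\le 2\}$ rather than outside it. You establish strictness only when $G-S$ is a forest (and even there your closed-form eigenvalues for the blocks are imprecise, since the blocks carry Dirichlet-type modified diagonals, though your equality-propagation alternative is sound), and you explicitly leave the cycle case ``to be decided.'' A proof whose decisive case is left undecided is not yet a proof of the stated theorem.

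Your hesitation at that point is, however, better founded than you may realize: the gap cannot be closed, because the statement itself fails when some connected component of $G$ is an even cycle. (Note that a cycle inside $G-S$ is automatically an entire component of $G$: each of its vertices already has its two allowed neighbors on the cycle.) For $G=C_4$ the Laplacian eigenvalues are $0,2,2,4$, so $m_G([4,\infty))=1$ while $\#\{j\in N \cond d(v_j)>2\}=0$, and the eigenvector $(1,-1,1,-1)^{\mathsf{T}}$ for the eigenvalue $4$ attains its maximal modulus at every vertex, none of degree exceeding $2$. The paper's own proof tries to exclude exactly this configuration via Varga's theorem on irreducible matrices \cite{VARGA}: having reduced to an irreducible $L_2$ with all diagonals $2$ and absolute row sums $4$ (i.e., a cycle component), it asserts that the rows of $L_2\bx=4\bx$ together with $|x_1|=\cdots=|x_\ell|$ force $x_i=-x_j$ for \emph{all} $j\ne i$, which is impossible for $\ell\ge 3$. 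But each row only forces $x_j=-x_i$ for the two \emph{neighbors} $j$ of $i$, i.e., sign alternation along edges, and on an even cycle the alternating vector satisfies all of these constraints simultaneously; the paper's deduction breaks exactly where your argument stops. Both your proof and the paper's become correct under the additional hypothesis that no component of $G$ is an even cycle --- in particular for trees and forests, or for connected $G$ possessing at least one vertex of degree $>2$. Under that hypothesis every irreducible block of $B$ either contains a row whose Gerschgorin disk reaches at most $3$ (a vertex of degree $1$, or a degree-$2$ vertex adjacent to a deleted branching vertex), in which case your equal-modulus, sign-alternating propagation must terminate there in a contradiction, or is an odd cycle, on which global sign alternation is impossible; either way $4\notin\eig(B)$, and the same propagation settles the localization claim at $\lambda=4$.
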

We refer the interested readers to \cite[Sec.~2]{MERRIS1} that reviews various
relationships between the multiplicity of certain eigenvalues and the 
graph structural properties different from our Theorem~\ref{thm:ew4ltdeg2}.
\begin{proof}
The  second statement follows from Lemma~\ref{lem:gerschgorin-cor}, 
because the Gerschgorin disks corresponding to 
vertices of degree 1 or 2 do not include $\lambda > 4$. 

We next prove the first statement. 
Let $L$ be a Laplacian matrix of $G$. We can apply a permutation $P$ such that 
\begin{equation}  \label{eq:Aperm}
P^{\mathsf{T}}LP=
\begin{bmatrix}L_1 & E^{\mathsf{T}}\\E  & L_2\end{bmatrix}, 
\end{equation}
where the diagonals of $L_1$ are  3 or larger (correspond to vertices of degree
$>2$), and the diagonals of $L_2$ are 2 or 1. 
Suppose $L_2$ is $\ell$-by-$\ell$. 
By Gerschgorin's theorem all the eigenvalues of $L_2$ must be 4 or below. 

In fact, we can prove the eigenvalues of $L_2$ are strictly below 4. 
By \cite[Theorem~1.12]{VARGA}, 
for an irreducible matrix (a Laplacian of a connected graph is irreducible) 
an eigenvalue can exist on the boundary of the union of the Gerschgorin disks only if it is the boundary of \emph{all} the disks. Furthermore, if there is such an eigenvalue, then the corresponding eigenvector has the property that all its components have the same absolute value.

Suppose on the contrary that $L_2\bx=4\bx$. Suppose without loss of generality that $L_2$ is irreducible; if not, we can apply a permutation so that $PL_2P^{\mathsf{T}}$ is block diagonal and treat each block separately. 

Now if $L_2$ has a diagonal 1, then the corresponding Gerschgorin 
 disk lies on $[0,2]$, which does not pass 4. Hence by \cite[Theorem~1.12]{VARGA} this case is ruled out.
It follows that all the diagonals of $L_2$ are 2, and the sum of the absolute values of the rows of $L_2$ are all 4 (this happens only if $L_2$ is disjoint from $L_1$). So we need $\ell\geq 3$ (for example when $\ell=3$, 
$L_2=\mbox{\scriptsize
$\begin{bmatrix}
  2& -1& -1\\-1 &  2& -1\\ -1& -1 & 2
\end{bmatrix}$}$).
Now the $i$th ($1\leq i\leq \ell$) row of $L_2\bx=4\bx$ and the fact $|x_1|=|x_2|=\cdots =|x_\ell|$ force $x_i=-x_j$ for all $j\neq i$. This needs to hold for all $i$, which clearly cannot happen for $\ell\geq 3$. Therefore the eigenvalues of $L_2$ must be strictly below 4.

By the min-max characterization of the eigenvalues of $P^{\mathsf{T}}LP$, 
denoting by $\lambda_{\ell}(P^{\mathsf{T}}LP)$ the $\ell$th smallest eigenvalue,
we have 
\begin{align*}
\lambda_{\ell}(P^{\mathsf{T}}LP)&=\min_{\dim{S}=\ell}\max_{\by\in \mbox{\scriptsize span\normalsize}{(S)},\|\by\|_2=1}\by^{\mathsf{T}}(P^{\mathsf{T}}LP)\by. 
\end{align*}
Hence letting $S_0$ be the last $\ell$ column vectors of the identity $I_n$ and
noting $S_0^{\mathsf{T}}P^{\mathsf{T}}LPS_0=L_2$, we have
\begin{eqnarray*}
\lambda_{\ell}(P^{\mathsf{T}}LP) \leq \max_{\by\in S_0,\|\by\|_2=1}\by^{\mathsf{T}}(P^{\mathsf{T}}LP)\by &=& \lambda_{\mathrm{max}}(S_0^{\mathsf{T}}P^{\mathsf{T}}LPS_0) \\
&=& \lambda_{\mathrm{max}}(L_2).
\end{eqnarray*}
Since $\lambda_{\mathrm{max}}(L_2)<4$, we conclude that $P^{\mathsf{T}}LP$ (and hence
$L$) has at least $\ell$ eigenvalues smaller than 4, i.e.,
$m_G([0,4)) \geq \ell$.  Hence, $m_G([4,\infty)) = n - m_G([0,4)) \leq n - \ell
= \#\{j \in N \cond d(v_j) > 2\}$, which proves the first statement. 
\end{proof}

To give a further explanation for the eigenvector localization behavior 
observed in Introduction, we next show that eigenvector components of 
$\lambda>4$ must decay exponentially along a branching path. 
\begin{theorem}\label{thm:expdecay}
Suppose that a graph $G$ has a branch that consists of a path of length $k$, 
whose indices are $\{i_1,i_2,\ldots,i_k\}$ where $i_1$ is connected to the rest
of the graph and $i_k$ is the leaf of that branch.  Then for any eigenvalue 
$\lambda$ greater than 4, the corresponding eigenvector 
$\bphi=\(\phi_{1}, \cdots, \phi_{n}\)^\mathsf{T}$ satisfies
\begin{equation}
\label{eq:phidecay}
 |\phi_{i_{j+1}}| \leq \gamma|\phi_{i_j}| \quad \text{for $j=1,2,\ldots,k-1$},
\end{equation}
where 
\begin{equation}
\label{eq:decayrate}
\gamma := \frac{2}{\lambda-2} < 1.
\end{equation}
Hence $|\phi_{i_j}| \leq \gamma^{j-1}|\phi_{i_1}|$ for $j=1,\ldots,k$, that is,
the magnitude of the components of an eigenvector corresponding to 
any $\lambda > 4$ along such a branch decays exponentially toward
its leaf with rate at least $\gamma$. 
\end{theorem}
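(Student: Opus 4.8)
The plan is to read off the eigenvalue equation $L\bphi=\lambda\bphi$ one row at a time along the branch and then run an induction \emph{starting from the leaf} $i_k$ and moving inward toward $i_1$. Since every interior vertex $i_2,\ldots,i_{k-1}$ has degree exactly $2$ and $i_k$ is a leaf (degree $1$), the rows of $L\bphi=\lambda\bphi$ indexed by these vertices have precisely the same local form as \eqref{eqn:leaf}--\eqref{eqn:middle}, independently of how $i_1$ attaches to the rest of $G$: namely $\lambda\phi_{i_k}=\phi_{i_k}-\phi_{i_{k-1}}$ at the leaf and $\lambda\phi_{i_j}=2\phi_{i_j}-\phi_{i_{j-1}}-\phi_{i_{j+1}}$ for $2\le j\le k-1$. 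Rearranging the interior equation gives $\phi_{i_{j-1}}=-(\lambda-2)\phi_{i_j}-\phi_{i_{j+1}}$, and this is the recursion I will drive the induction with.

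First I would establish the base case at the leaf. The leaf equation yields $|\phi_{i_{k-1}}|=(\lambda-1)|\phi_{i_k}|$, and since $\lambda>4$ one checks the elementary inequality $\lambda-1\ge 1/\gamma=(\lambda-2)/2$; this gives $|\phi_{i_k}|\le\gamma|\phi_{i_{k-1}}|$, which is \eqref{eq:phidecay} for $j=k-1$. For the inductive step, suppose \eqref{eq:phidecay} holds at $i_{m+1}$, i.e. $|\phi_{i_{m+2}}|\le\gamma|\phi_{i_{m+1}}|$. Applying the reverse triangle inequality to the rearranged interior equation at $i_{m+1}$ gives
\begin{equation*}
|\phi_{i_m}| \;\ge\; (\lambda-2)|\phi_{i_{m+1}}| - |\phi_{i_{m+2}}| \;\ge\; \bigl((\lambda-2)-\gamma\bigr)|\phi_{i_{m+1}}|,
\end{equation*}
where I used $\lambda-2>0$ and the inductive hypothesis. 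The step then hinges on the single algebraic fact $(\lambda-2)-\gamma\ge 1/\gamma$; substituting $\gamma=2/(\lambda-2)$ reduces this to $\gamma\le 1$, which holds because $\lambda>4$ forces $\lambda-2>2$. Hence $|\phi_{i_m}|\ge(1/\gamma)|\phi_{i_{m+1}}|$, and scaling by $\gamma$ yields $|\phi_{i_{m+1}}|\le\gamma|\phi_{i_m}|$, i.e. \eqref{eq:phidecay} at index $m$. Running $m$ from $k-2$ down to $1$ (each step using the degree-$2$ row at $i_{m+1}$) covers every claimed inequality.

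The exponential bound is then just the telescoping product of \eqref{eq:phidecay}: chaining the inequalities gives $|\phi_{i_j}|\le\gamma|\phi_{i_{j-1}}|\le\cdots\le\gamma^{j-1}|\phi_{i_1}|$. I would also remark that no case distinction for vanishing components is required, since every step is obtained by scaling an inequality by the positive constant $\gamma$ rather than by dividing.

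I expect the only genuinely delicate point to be the \emph{direction} of the induction: the only boundary data for the recursion sits at the leaf (the degree-$1$ row), whereas $i_1$ is entangled with the unknown rest of the graph, so an inward induction from $i_k$ is essential --- a naive outward induction from $i_1$ has no usable starting estimate. Everything else is a reverse triangle inequality together with the one-line verification that $(\lambda-2)-\gamma\ge 1/\gamma$ for $\lambda>4$.
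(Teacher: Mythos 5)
Your proof is correct and takes essentially the same approach as the paper's: both read off the leaf row and the degree-2 interior rows of $L\bphi=\lambda\bphi$ along the branch and run an induction inward from the leaf $i_k$, exactly as you argue is forced by where the boundary data sits. The only cosmetic difference is that the paper's induction carries the monotonicity bound $|\phi_{i_{j+1}}|\le|\phi_{i_j}|$ and then upgrades it to the factor $\gamma$ by the triangle inequality, whereas you carry the $\gamma$-bound itself as the inductive hypothesis and close each step with the reverse triangle inequality plus the identity $(\lambda-2)-\gamma\ge 1/\gamma$.
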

\begin{proof}
There exists a permutation $P$ such that 
\[\widehat L:= P^{\mathsf{T}}LP=
\begin{bmatrix}L_1 & E^{\mathsf{T}}\\ E & L_2\end{bmatrix}, 
\]
where
\[L_2=
\begin{bmatrix}
 2 & -1 &        &        &        &    \\
-1 &  2 &   -1   &        &        &    \\
   & -1 &    2   &   -1   &        &    \\
   &    & \ddots & \ddots & \ddots &    \\
   &    &        &    -1  &    2   & -1 \\
   &    &        &        &   -1   &  1
  \end{bmatrix}
\in \Rf^{k \times k}
\]
and $E$ has a -1 in the top-right corner and 0 elsewhere.
The diagonals of $L_2$ correspond to the vertices $v_{i_1}, \ldots, v_{i_k}$ 
of the branch under consideration.

Let $L\bphi=\lambda \bphi$ with $\lambda>4$. 
We have $\widehat{L}\by=\lambda \by$ where 
$\by=(y_1, y_2,\cdots,y_n)^\mathsf{T}=P^{\mathsf{T}}\bphi$. 
Note that $(y_{n-k+1}, y_{n-k+2},\cdots,y_n)=(\phi_{i_1},\phi_{i_2},\ldots, \phi_{i_k})$. 
The last row of $\widehat{L}\by=\lambda \by$ gives
\[-y_{n-1}+y_{n}=\lambda y_n,\]
hence 
\begin{equation}
\label{eq:n-1}
|y_{n}|= \frac{1}{\lambda-1}|y_{n-1}| \leq \gamma |y_{n-1}|.  
\end{equation}
The $(n-1)$st row of $\widehat{L} \by=\lambda \by$ gives
\[-y_{n-2}+2y_{n-1}-y_{n}=\lambda y_{n-1}.\]
Using $|y_n| \leq |y_{n-1}|$ we get 
\begin{equation}
\label{eq:n-2}
|y_{n-1}|=\frac{|y_{n-2}+y_{n}|}{\lambda-2} \leq \frac{|y_{n-2}|+|y_{n-1}|}{\lambda-2},  
\end{equation}
from which we get $|y_{n-1}| \leq |y_{n-2}|$. Therefore $|y_n| \leq |y_{n-1}| \leq |y_{n-2}|$, and so
\[|y_{n-1}|=\frac{|y_{n-2}+y_{n}|}{\lambda-2} \leq \frac{2|y_{n-2}|}{\lambda-2}=\gamma|y_{n-2}|. \]
Repeating this argument $k-1$ times we obtain \eqref{eq:phidecay}. 
\end{proof}

We note that the inequalities \eqref{eq:n-1} and \eqref{eq:n-2} include 
considerable overestimates, and tighter bounds can be obtained at the cost of
simplicity. Hence in practice the decay rate is much smaller than $\gamma$
defined in \eqref{eq:decayrate}.  We also note that the larger the
eigenvalue $\lambda > 4$, the smaller the decay rate $\gamma$ is, i.e., the
faster the amplitude decays along the branching path.

Also note that the above result holds for any branching path of a tree. 
In particular, if a tree has $k$ branches consisting of paths, they must all
have the exponential decay in eigenvector components if $\lambda > 4$.  
This gives a partial explanation for the eigenvector localization behavior 
observed in Introduction.   However, the theorem cannot compare the 
eigenvector components corresponding to branches emanating from different 
vertices of degrees higher than 2, so a complete explanation remains an 
open problem.

\begin{remark}
\label{rem:ew4}
Let us briefly consider the case $\lambda=4$. In this case
we have $\gamma=\frac{2}{\lambda-2}=1$, suggesting the
corresponding eigenvector components along a branching path may not decay.
However, we can still prove that unless
$\phi_{i_1}=\phi_{i_2}=\cdots =\phi_{i_k}=0$, we must have
\begin{equation}  \label{eq:philam4}
|\phi_{i_k}| < |\phi_{i_{k-1}}|<\cdots <|\phi_{i_1}|.
\end{equation}
In other words, the eigenvector components must decay
along the branch, although not necessarily exponentially. To
see this, we first note that if $y_n=0$, then the last row
of $\widehat L \by = \lambda \by$ forces $y_{n-1}=0$. Then,
$y_n=y_{n-1}=0$ together with the $(n-1)$st row gives
$y_{n-2}=0$. Repeating this argument we conclude that $y_j$
must be zero for all $j=n-k+1,\ldots,n$. Now suppose that
$|y_n|>0$. Following the above arguments we see that the
inequality in \eqref{eq:n-1} with $\gamma=1$ must be strict, that is,
$|y_n|<|y_{n-1}|$. Using this we see that the inequality in
\eqref{eq:n-2} must also be strict, hence
$|y_{n-1}|<|y_{n-2}|$. Repeating this argument proves
\eqref{eq:philam4}.
\end{remark}

\section{A Class of Trees Having the Eigenvalue 4}
\label{sec:mK13}
As raised in Introduction, we are interested in answering
Q3: Is there any tree that possesses an eigenvalue exactly equal to 4?
To answer this question, we use the following result of Guo
\cite{GUO} (written in our own notation).
\begin{theorem}[Guo 2006, \cite{GUO}]
\label{thm:guo}
Let $T$ be a tree with $n$ vertices. Then,
\bdm
\lambda_j(T) \leq \left\lceil \frac{n}{n-j} \right\rceil, \quad j=0, \ldots, n-1,
\edm
and the equality holds 
if and only if all of the following hold:
 a) $j \neq 0$; b) $n-j$ divides $n$; and c)
$T$ is spanned by $n-j$ vertex disjoint copies of $K_{1,\frac{j}{n-j}}$.
\end{theorem}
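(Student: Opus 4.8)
The plan is to prove the equivalent statement about the \emph{$p$-th largest} eigenvalue. Writing $p:=n-j$ and letting $\mu_i$ denote the $i$-th largest Laplacian eigenvalue (so $\mu_p=\lambda_{n-p}(T)$), the asserted inequality is $\mu_p(T)\le\lceil n/p\rceil$; equivalently, at most $p-1$ eigenvalues of $L(T)$ exceed $\lceil n/p\rceil$. Two special cases fix the scale and explain the side conditions: $p=1$ is the classical bound $\mu_1=\lambda_{\max}\le n$ (sharp only for $T=K_{1,n-1}$), while $j=0$ gives $0=\lambda_0<1=\lceil n/n\rceil$, which is exactly why equality must exclude $j=0$ even though conditions (b) and (c) hold there vacuously. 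As a benchmark I would first note the crude estimate $\mu_p\le\operatorname{tr}L(T)/p=(2n-2)/p$, which already has the correct shape but the wrong constant; the whole point of the theorem is a tree-specific sharpening of this constant from $\approx 2n/p$ down to $\lceil n/p\rceil$.

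The ``if'' half of the equality statement I would dispatch directly, and it also shows the bound is attained. Suppose $T$ is spanned by $p$ vertex-disjoint copies of $K_{1,s}$ with $s=j/(n-j)=n/p-1$. Delete the $p-1$ edges of $T$ that join these stars and write $L(T)=L_0+L_1$, where $L_0$ is the block-diagonal Laplacian of the $p$ disjoint stars and $L_1\succeq0$ is the Laplacian of the $p-1$ deleted edges, so $\operatorname{rank}L_1\le p-1$. Each star contributes the eigenvalues $0$, $1$ (with multiplicity $s-1$) and $s+1=n/p$, so $L_0$ attains its maximum $n/p$ with multiplicity $p$; hence its $p$-th largest eigenvalue already equals its largest, $\mu_p(L_0)=\mu_1(L_0)=n/p$. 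Adding a positive-semidefinite matrix can only raise eigenvalues, so $\mu_p(L)\ge\mu_p(L_0)$, while Weyl's inequality applied to a rank-$\le(p-1)$ perturbation gives $\mu_p(L)\le\mu_{p-(p-1)}(L_0)=\mu_1(L_0)$. The two bounds coincide, forcing $\mu_p(T)=n/p=\lceil n/p\rceil$.

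For the upper bound in general I would use the extremal-graph method: exhibit a local grafting operation on trees that preserves the number of vertices and does not decrease $\mu_p$, and iterate it until reaching a canonical maximizer of $\mu_p$ among all $n$-vertex trees; one then identifies this maximizer (when $p\mid n$) with the star-forest tree above and invokes the value just computed. The main obstacle, as I see it, is the monotonicity lemma $\mu_p(T)\le\mu_p(T')$ under the graft, and the reason it is unavoidable is that every \emph{one-step} interlacing tool is tight for some trees but useless for others. For the star $K_{1,n-1}$ with $p=2$, deleting an edge (a rank-$1$ positive-semidefinite perturbation) yields only the vacuous $\mu_2\le n-1$, while removing the central vertex (Cauchy interlacing) is exactly sharp; conversely for $P_4$ with $p=2$ edge-deletion through the middle edge is sharp ($\mu_2=2$) but every single-vertex deletion overshoots. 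Since no single local operation is uniformly tight, the proof must either graft all trees toward one common extremal tree or count the positive pivots of $L(T)-\lceil n/p\rceil I$ globally (e.g.\ by the leaf-to-root diagonalization of a tree Laplacian, bounding the number of positive pivots by $p-1$ using $\sum_j d(v_j)=2n-2$).

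The necessity in the equality statement I would then extract from the equality analysis of the monotone operation, combined with the rigidity fact that $\lambda_{\max}=n$ holds for an $n$-vertex graph iff its complement is disconnected, which for a tree means it is a star (consistent with the lower bound \eqref{eqn:lowerbound}). Tracing equality backwards should force three things simultaneously: (b) that $p=n-j$ divides $n$, for otherwise the extremal value lies strictly below the integer $\lceil n/p\rceil$ (as already for $P_5$, $p=2$, where $\mu_2=4\cos^2(\pi/5)<3$ by the path spectrum $4\sin^2(\pi k/2n)$); (c) that each of the $p$ pieces of the extremal decomposition is a star on $n/p$ vertices, i.e.\ $K_{1,\,n/p-1}=K_{1,\,j/(n-j)}$, and that these pieces tile $T$; and (a) that $j\neq0$, since $\lceil n/n\rceil=1$ is never attained by $\lambda_0=0$. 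Together these reproduce exactly conditions (a)--(c).
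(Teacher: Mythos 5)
A preliminary remark on the comparison itself: the paper offers no proof of this statement. Theorem~\ref{thm:guo} is quoted verbatim as a known result of Guo \cite{GUO} and is then used as a black box to derive Corollary~\ref{cor:mK13} and Proposition~\ref{thm:n8}. So your proposal can only be judged on its own terms, not against an in-paper argument.

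On those terms, your sufficiency argument is correct and complete, and it is the strongest part of the proposal: with $p:=n-j$, writing $L(T)=L_0+L_1$ where $L_0$ is the block-diagonal Laplacian of the $p$ vertex-disjoint stars $K_{1,n/p-1}$ and $L_1$ is the positive-semidefinite Laplacian of the $p-1$ connecting edges (so $\operatorname{rank} L_1\leq p-1$), the two Weyl inequalities $\mu_p(L)\geq\mu_p(L_0)=n/p$ and $\mu_p(L)\leq\mu_1(L_0)+\mu_p(L_1)=\mu_1(L_0)=n/p$ pin down $\mu_p(T)=n/p=\lceil n/p\rceil$ exactly. Your side computations (the trace bound, the star, $P_4$, $P_5$) are also all accurate. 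The gap is that the heart of the theorem --- the inequality $\mu_p(T)\leq\lceil n/p\rceil$ for \emph{every} tree, and the necessity of conditions (a)--(c) at equality --- is never actually proven. You name two candidate strategies (a grafting operation together with a monotonicity lemma $\mu_p(T)\leq\mu_p(T')$, or a global count of the positive pivots of $L(T)-\lceil n/p\rceil I$ via leaf-to-root elimination), you correctly observe that no single one-step interlacing or edge-deletion tool is uniformly tight, and you yourself flag the monotonicity lemma as ``the main obstacle''; but the graft is never constructed, its monotonicity is never proved, the extremal tree is never identified (in particular when $p\nmid n$, where the claimed strict inequality is exactly what needs proof), and the pivot count is never carried out. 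The necessity direction inherits this gap entirely: ``tracing equality backwards'' through an operation that was never defined establishes nothing, and the single example $P_5$ illustrates condition (b) but cannot prove it. What you have is a correct proof that the bound is attained under hypotheses (a)--(c), plus a plausible research plan for the rest; as a proof of the theorem as stated, it is incomplete.
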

\noindent
Here, a tree $T=T(V,E)$ is said to be spanned by $\ell$ 
\emph{vertex disjoint copies} of identical graphs $K_i(V_i,E_i)$ for 
$i=1,\ldots,\ell$ if $V=\bigcup_{i=1}^\ell V_i$ and 
$V_i \cap V_j=\emptyset$ for all $i \neq j$.
Figure~\ref{fig:mK13new}(a) shows an example of such vertex disjoint copies for 
$K_i = K_{1,3}$ by connecting their central vertices.
We note that there are many other ways to form disjoint vertex copies of $K_{1,3}$.

This theorem implies the following 
\begin{corollary}
\label{cor:mK13}
A tree has an eigenvalue exactly equal to 4 
if 
it is spanned by
$m (= n/4 \in \N)$ vertex disjoint copies of $K_{1,3} \equiv S(1,1,1)$.
\end{corollary}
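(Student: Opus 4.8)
The plan is to read the hypothesis directly into the equality clause of Guo's theorem (Theorem~\ref{thm:guo}) and to identify the index $j$ at which the bound is attained with value exactly $4$. That theorem characterizes when $\lambda_j(T) = \lceil n/(n-j) \rceil$ holds via the three conditions (a)--(c); since our hypothesis is precisely a statement about $T$ being spanned by vertex-disjoint stars, the strategy is to choose $j$ so that those equality conditions coincide with what we are given.

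First I would determine the correct index. Condition (c) requires $T$ to be spanned by $n-j$ vertex-disjoint copies of $K_{1,j/(n-j)}$, whereas we are told $T$ is spanned by $m=n/4$ copies of $K_{1,3}$. Matching the star parameter forces $j/(n-j) = 3$, hence $4j = 3n$ and $j = 3n/4$; matching the number of copies then gives $n-j = n/4 = m$ consistently. The hypothesis $n/4 \in \N$ guarantees that $j = 3n/4$ is a genuine integer lying in $\{1,\ldots,n-1\}$ (using $n \geq 4$), so this is a legitimate index for the theorem.

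With $j = 3n/4$ in hand, I would verify the three equality conditions: (a) $j \neq 0$, since $n \geq 4$; (b) $n-j = n/4$ divides $n$, since $n/(n/4) = 4$; and (c) is exactly our standing hypothesis, that $T$ is spanned by $n-j = n/4$ vertex-disjoint copies of $K_{1,3}$. Guo's theorem then yields
\[
\lambda_{3n/4}(T) = \left\lceil \frac{n}{n-j} \right\rceil = \left\lceil \frac{n}{n/4} \right\rceil = \lceil 4 \rceil = 4,
\]
so $T$ has an eigenvalue exactly equal to $4$, as claimed.

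There is no serious analytic obstacle here, since the corollary is essentially a specialization of Guo's theorem; the only step requiring care is the bookkeeping of the previous paragraph, namely solving $j/(n-j)=3$ for the attaining index and checking the divisibility and integrality conditions so that the equality clause genuinely applies. I would also note that the argument uses only the \emph{if} direction of Guo's equality characterization, so it produces the eigenvalue $4$ at the specific index $j = 3n/4$ without any further analysis of the ordered spectrum.
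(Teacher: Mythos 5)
Your proposal is correct and follows exactly the route the paper intends: the corollary is stated as an immediate consequence of Guo's theorem, obtained by reading the hypothesis into the equality clause with index $j = 3n/4$, so that conditions (a)--(c) hold and $\lambda_{3n/4}(T) = \lceil n/(n/4) \rceil = 4$. Your bookkeeping (solving $j/(n-j)=3$, checking divisibility and integrality) matches the paper's implicit argument, as confirmed by its later remark that $4$ is the $3n/4\,(=3m)$th eigenvalue for this class of trees.
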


\begin{figure}
\renewcommand{\subfigtopskip}{0pt}
\renewcommand{\subfigbottomskip}{0pt}
\renewcommand{\subfigcapskip}{0pt}
\renewcommand{\subfigcapmargin}{0pt}
\begin{center}
\renewcommand{\thesubfigure}{(a)}
\subfigure[$\ $]{\includegraphics[width=0.5\textwidth]{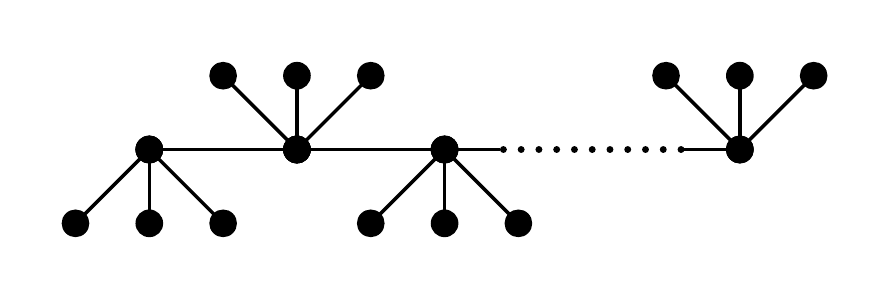}}
\renewcommand{\thesubfigure}{(b)}
\subfigure[$\ $]{\includegraphics[width=0.475\textwidth]{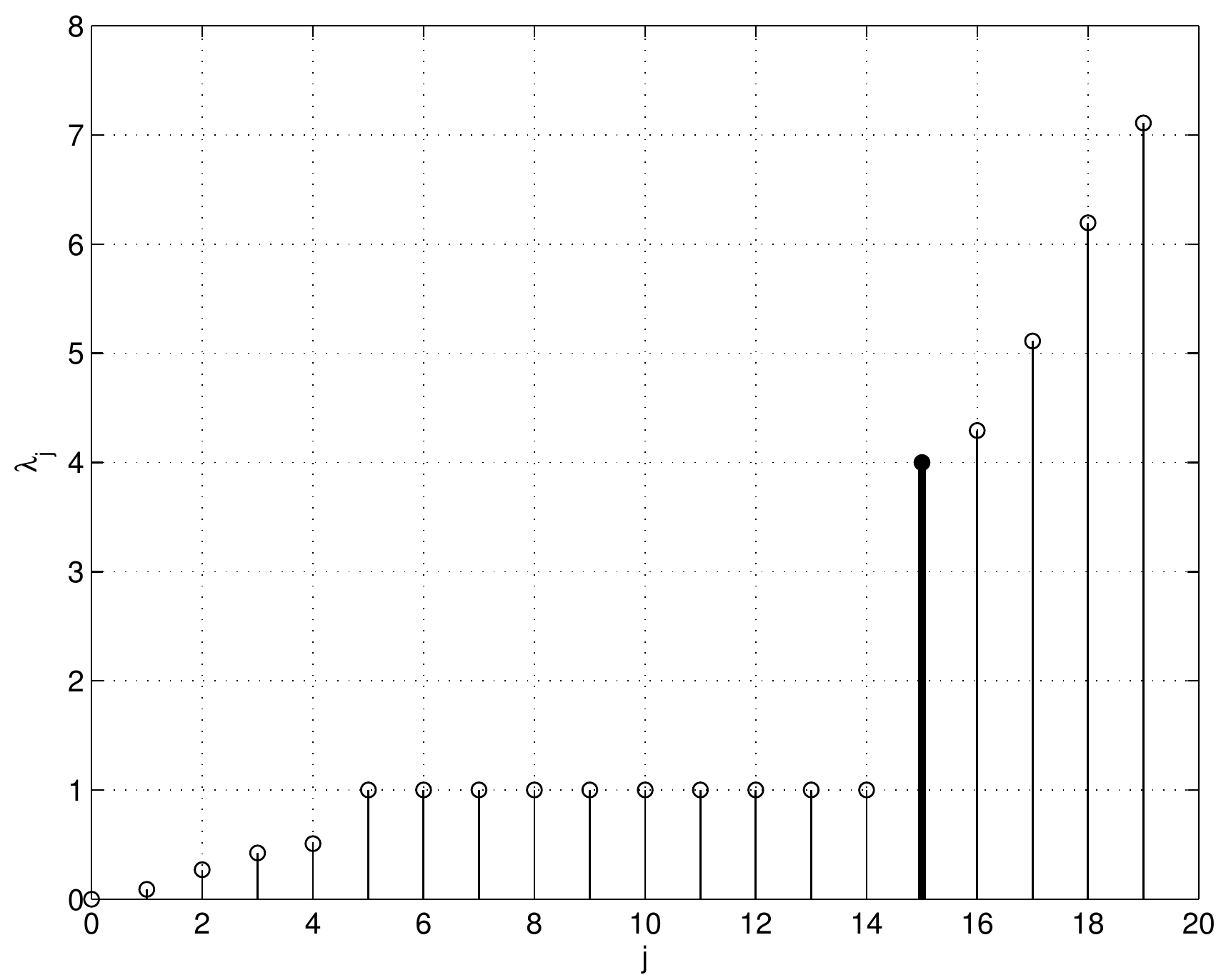}}
\caption{(a) A tree spanned by multiple copies of $K_{1,3}$ connected via their
central vertices.  This tree has an eigenvalue equal to 4 with multiplicity 1.
(b) The eigenvalue distribution of such a tree spanned by 5 copies of
$K_{1,3}$. We note that $S\ell(T)=1$ for this tree. }
\label{fig:mK13new}
\end{center}
\end{figure}

Figure~\ref{fig:mK13new}(b) shows the eigenvalue distribution of a tree 
spanned by $m=5$ copies of $K_{1,3}$ as shown in Figure~\ref{fig:mK13new}(a). 
Regardless of $m$, the eigenvector 
corresponding to the eigenvalue 4 has only two values: one constant value at the
central vertices, and the other constant value of the opposite sign at the 
leaves, as shown in Figure~\ref{fig:mK13}(a). 
By contrast, the eigenvector corresponding to the largest eigenvalue is again 
concentrated around the central vertex as shown in Figure~\ref{fig:mK13}(b).
\begin{figure}
\renewcommand{\subfigtopskip}{0pt}
\renewcommand{\subfigbottomskip}{0pt}
\renewcommand{\subfigcapskip}{0pt}
\renewcommand{\subfigcapmargin}{0pt}
\begin{center}
\renewcommand{\thesubfigure}{(a)}
\subfigure[$\bphi_{15}$]{\includegraphics[width=0.49\textwidth]{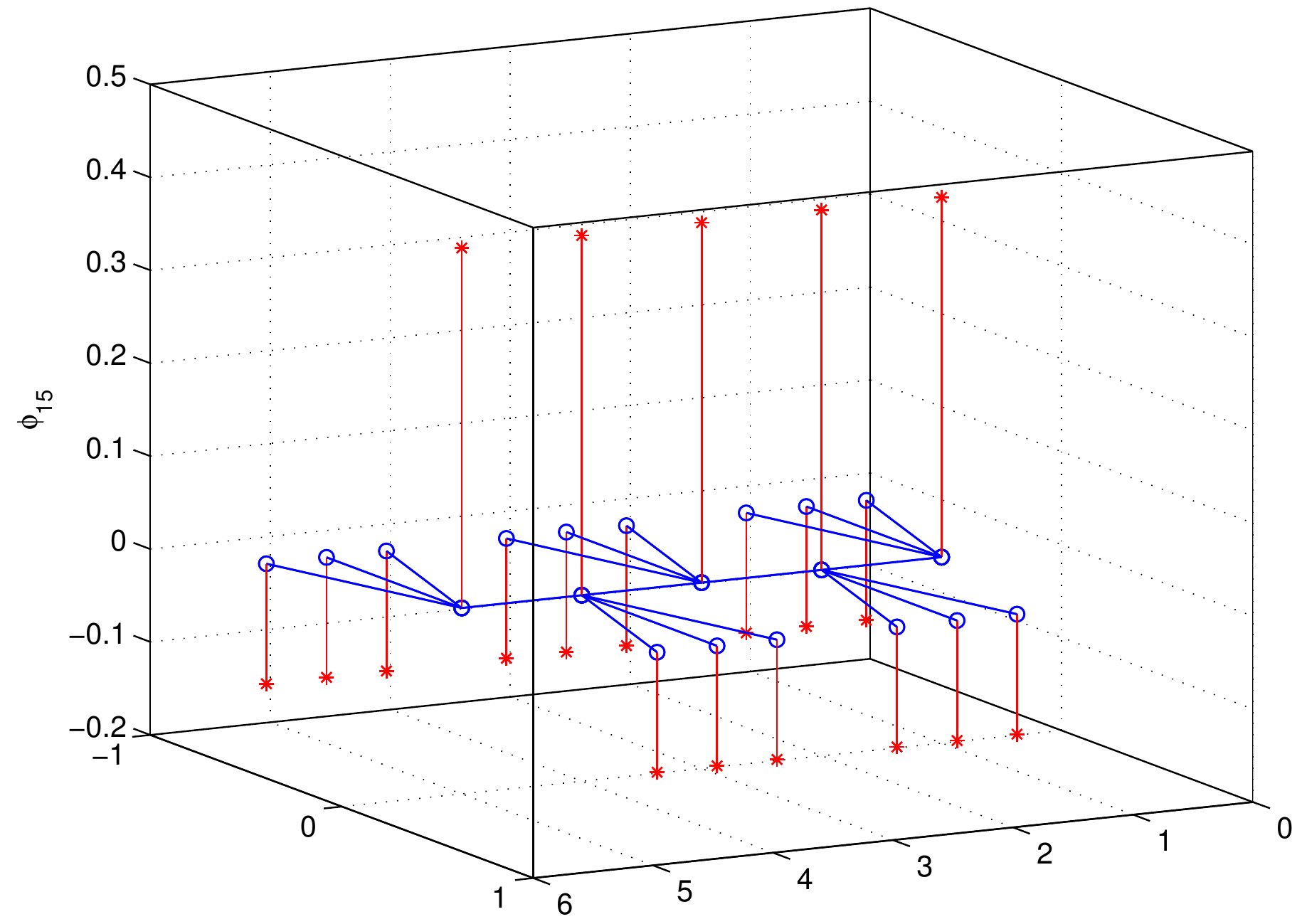}}
\renewcommand{\thesubfigure}{(b)}
\subfigure[$\bphi_{19}$]{\includegraphics[width=0.49\textwidth]{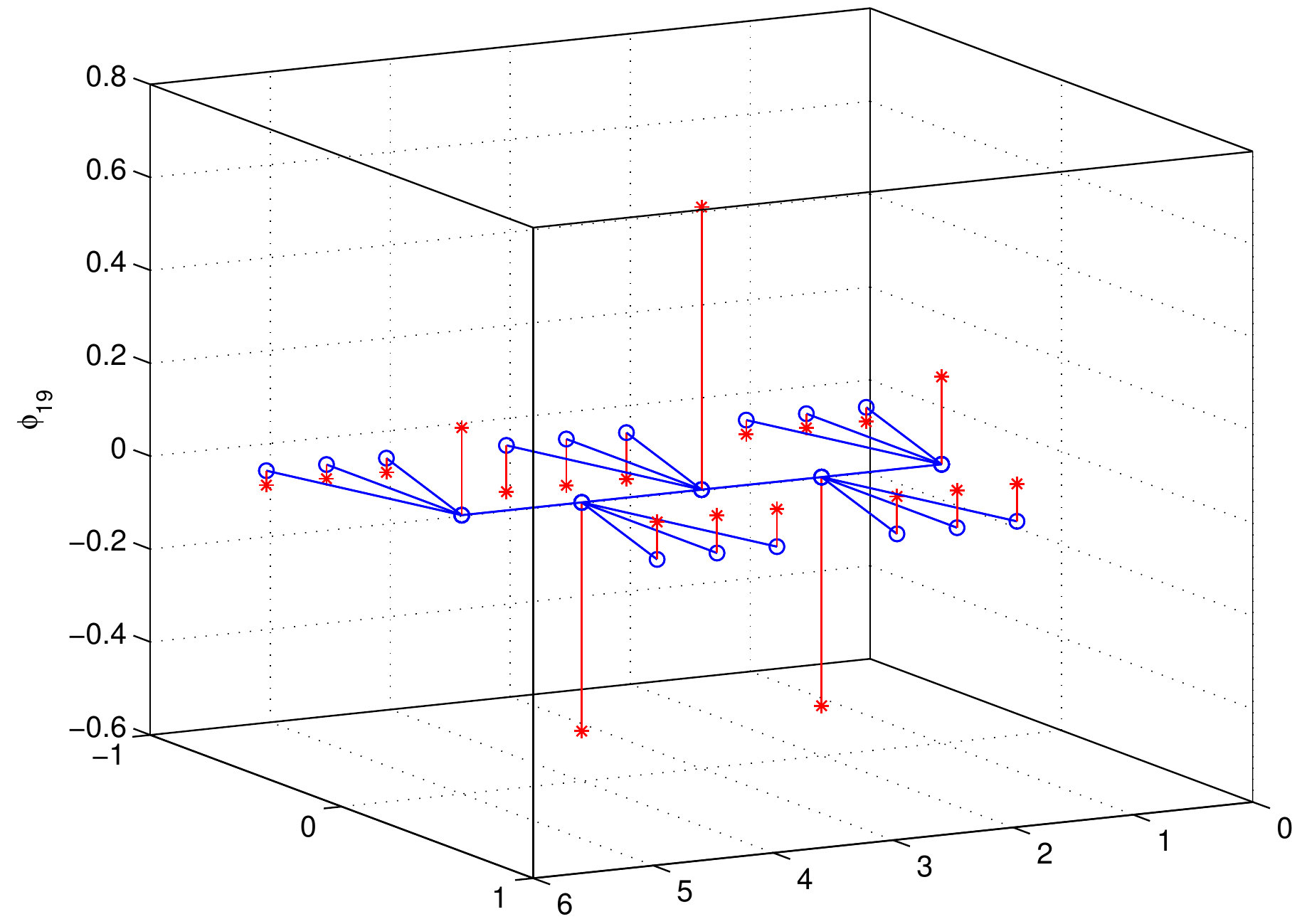}}
\caption{
(a) The eigenvector $\bphi_{15}$ corresponding to $\lambda_{15}=4$ in the 3D perspective view. 
(b) The eigenvector $\bphi_{19}$ corresponding to the maximum eigenvalue 
$\lambda_{19}=7.1091$, which concentrates around the central vertex.}
\label{fig:mK13}
\end{center}
\end{figure}

Theorem~\ref{thm:guo} asserts that a general tree $T$ with $n$ vertices can have
\emph{at most} $\lfloor n/4 \rfloor$ Laplacian eigenvalues $\geq 4$.
We also know by Theorem~2.1 of \cite{GRONE-MERRIS-SUNDER} that any tree $T$
possessing the eigenvalue $4$ must have multiplicity $m_T(4) = 1$ and $n=4m$ for some 
$m \in \N$. 
Hence, Theorem~\ref{thm:guo} also asserts that trees spanned by $m$ vertex
disjoint copies of $K_{1,3}$ form the only class of trees for which 4 is the
$3n/4 (=3m)$th eigenvalue of $T$; in other words, trees in this class are 
the only ones that have \emph{exactly} $n/4 (=m)$ eigenvalues $\geq 4$.  

Trees spanned by vertex disjoint copies of $K_{1,3}$, however, are not the only
ones that have an eigenvalue exactly equal to 4. 
For instance, Example~2.9 of \cite{GRONE-MERRIS-SUNDER}, which has $n=36=4 \cdot 9$ vertices and is called $Z_4$ as shown in Figure~\ref{fig:z4}, 
is \emph{non-isomorphic} to any tree spanned by 9 vertex disjoint copies 
of $K_{1,3}$; 
yet it has $\lambda_{30}=4$ (but $\lambda_{27} = 1 \neq 4$).

On the other hand, we have the following
\begin{prop}
\label{thm:n8}
If $n\leq 11$, any tree possessing an eigenvalue exactly equal to 4 must
be spanned by vertex disjoint copies of $K_{1,3}$.
\end{prop}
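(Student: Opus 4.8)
The plan is to turn the global hypothesis $n \le 11$ into a statement about only a couple of values of $n$, and then to dispose of those finitely many cases. The essential tool is the cited result of Grone, Merris and Sunder \cite{GRONE-MERRIS-SUNDER} (already invoked above): any tree carrying the Laplacian eigenvalue $4$ has $n = 4m$ for some $m \in \N$, and moreover the multiplicity is $m_T(4) = 1$. Combined with $n \le 11$ this immediately forces $n \in \{4, 8\}$, since $4 \cdot 3 = 12 > 11$. The simplicity statement $m_T(4)=1$ also tells us the $\lambda=4$ eigenspace is one-dimensional, which will make the eigenvector essentially unique and hence easy to pin down. Thus the proposition reduces to a finite verification.

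The case $n = 4$ is immediate: there are only two trees on four vertices, the path $P_4$ and the claw $K_{1,3}$. By \eqref{eqn:1d-ew} the largest Laplacian eigenvalue of $P_4$ is $4\sin^2(3\pi/8) = 2 + \sqrt 2 < 4$, so $P_4$ does not carry the eigenvalue $4$, whereas $K_{1,3}$ has spectrum $\{0,1,1,4\}$ and is trivially spanned by a single copy of $K_{1,3}$. Hence the claim holds for $n = 4$.

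The case $n = 8$ is the heart of the matter. Here I would argue through the componentwise form of $L\bphi = 4\bphi$, namely $(d_i - 4)\phi_i = \sum_{j \sim i}\phi_j$: at a leaf $\ell$ the value of its unique neighbor is pinned to $-3\phi_\ell$, at a degree-$2$ vertex the two neighbor values sum to $-2\phi_i$, and so on. The concrete plan is to use these relations to eliminate leaves one at a time, successively reducing the homogeneous system $(L-4I)\bphi = 0$; equivalently, one fixes a seed leaf value and propagates the relations inward. For each tree on eight vertices that is \emph{not} a union of two claws joined by a single edge, I expect this reduction to terminate at a closing equation that forces the seed value---and hence all of $\bphi$---to vanish, contradicting $\bphi \neq 0$. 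Conversely, the trees that \emph{are} spanned by two vertex-disjoint copies of $K_{1,3}$ (the center--center, center--leaf, and leaf--leaf joins) genuinely carry the eigenvalue $4$ by Corollary~\ref{cor:mK13}, so these are exactly the survivors. As a blunt alternative, one may simply evaluate $\det(L - 4I)$ for each of the $23$ isomorphism types of trees on eight vertices.

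Several earlier results make this casework short rather than brute force. Theorem~\ref{thm:ew4ltdeg2} restricts attention to trees with at least one vertex of degree $>2$, and, more usefully, Theorem~\ref{thm:expdecay} together with Remark~\ref{rem:ew4} shows that along any pendant path the eigenvector components must strictly increase in magnitude toward the branch vertex; this rigidity is precisely what breaks when a pendant path is too long, yielding the contradiction $\bphi \equiv 0$ in each non-claw case. The hard part will be the bookkeeping: one must organize all $23$ types---say by longest pendant path and by the presence of a vertex of degree $\ge 4$---so that no tree with eigenvalue $4$ slips through, and one must take care with the degree-$2$ vertices that sit \emph{inside} a claw (they behave as claw-leaves), so that the two-valued structure of the $\lambda = 4$ eigenvector, constant on centers and on claw-leaves as in Figure~\ref{fig:mK13}(a), is recognized correctly.
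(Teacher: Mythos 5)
Your reduction to $n \in \{4,8\}$ via the Grone--Merris--Sunder divisibility-and-simplicity result is exactly the paper's first step, and your $n=4$ case is complete: only $P_4$ and $K_{1,3}$ exist, and the check is trivial. But the $n=8$ case---which you correctly call the heart of the matter---is not proved; it is only scheduled. The decisive assertion, that for each of the roughly twenty trees on eight vertices \emph{not} spanned by two disjoint claws, propagating $(L-4I)\bphi=0$ inward from the leaves terminates in a contradiction, is stated as an expectation (``I expect this reduction to terminate\ldots'') rather than carried out; the ``blunt alternative'' of evaluating $\det(L-4I)$ for all $23$ isomorphism types is likewise left undone. Since the content of the proposition at $n=8$ is precisely that those determinants are nonzero, a proof must exhibit this. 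Note also that your appeal to Remark~\ref{rem:ew4} only yields strict monotone decay of $|\phi_{i_j}|$ along a pendant path when $\lambda=4$; that is not by itself a contradiction for any particular tree, so the claim that it ``breaks'' each non-claw case still requires exactly the per-tree algebra you have deferred.

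For comparison, the paper avoids enumeration altogether. By Guo's theorem (Theorem~\ref{thm:guo}), for $n=8$ one has $\lambda_5 \le 3$, so an eigenvalue $4$ can only be $\lambda_6$ or $\lambda_7$. The equality case of Guo's theorem characterizes $\lambda_6=4$ as holding exactly when $T$ is spanned by two vertex-disjoint copies of $K_{1,3}$---this is where the structural conclusion comes from, with no casework. It then rules out $\lambda_7=4$ by combining the Grone--Merris lower bound $\lambda_7 \ge d_1+1$ with Stevanovi\'c's upper bound $\lambda_7 < d_1 + 2\sqrt{d_1-1}$: if $d_1=2$ the upper bound is $4$; if $d_1\ge 4$ the lower bound exceeds $4$; and if $d_1=3$, the Zhang--Luo characterization of equality $\lambda_{\max}=d_1+1$ (a vertex adjacent to all others, hence of degree $n-1=7$) is incompatible with $d_1=3$. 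If you want to salvage your write-up with minimal change, replace your $n=8$ verification plan by this eigenvalue-bound argument; otherwise you must actually carry out the $23$-tree computation.
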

\begin{proof}
First of all, 4 must divide $n$, hence $n=4m$ with $m=1$ or $m=2$.
If $m=1$, then we know from Theorem~\ref{thm:guo} that 
$\lambda_{max} = \lambda_3 \leq 4$ and $\lambda_2 \leq 2$.
Hence, $\lambda_3 = 4$ is the only possibility, and consequently 
$T=K_{1,3}$ using the same theorem.
If $m=2$, then Theorem~\ref{thm:guo} states that
$\lambda_{max} = \lambda_7 \leq 8$; $\lambda_6 \leq 4$; and $\lambda_5 \leq 3$; 
\dots
If $\lambda_6 = 4$, then the necessary and sufficient conditions for the 
equality in Theorem~\ref{thm:guo} state that $T$ must be spanned by
two vertex disjoint copies of $K_{1,3}$, so we are done.
Now we still need to show that $\lambda_7$ cannot be 4.
Let $d_1$ be the degree of the highest degree vertex of a tree $T$ 
under consideration.
Then, we have
\begin{equation}
\label{eqn:lbd-ubd}
d_1 + 1 \leq \lambda_7 < d_1 + 2\sqrt{d_1 - 1}.
\end{equation}
Here, the lower bound is due to Grone and Merris \cite{GRONE-MERRIS-2}
and the upper bound is due to Stevanovi\'c \cite{STEVANOVIC}.
Now, we need to check a few cases of the values of $d_1$.
\begin{itemize}
\item If $d_1 = 2$, then the upper bound in \eqref{eqn:lbd-ubd} is 4.  
Hence, $\lambda_7 = 4$ cannot happen.
(This includes the case of a path graph that cannot reach the eigenvalue 4).
So, we must have $d_1 \geq 3$.
\item If $d_1 > 3$, of course, the lower bound in \eqref{eqn:lbd-ubd} is greater
than 4. Hence, $\lambda_7$ cannot be 4 either.
\item Finally, if $d_1 = 3$, then the above bounds are: $4 \leq \lambda_7 < 5.8284 \cdots$.  Can $\lambda_7 = 4$ in this case?  According to Zhang and Luo \cite{ZHANG-LUO}, 
the equality in that lower bound holds if and only if there exists a vertex that
is adjacent to all the other vertices in $T$.  That is, the degree of that 
vertex is $n-1=7$.  Since $d_1=3$, this cannot happen.
\end{itemize}
Hence, for $n=8$, the only possibility for a tree $T$ to have an eigenvalue
exactly equal to 4 is the case when $\lambda_6 = 4$, which happens
if and only if $T$ is spanned by two vertex disjoint copies of $K_{1,3}$.
\end{proof}
It turns out, however, that proving the necessity for $n > 11$ using similar arguments quickly becomes cumbersome, even for the next step $n=12$. 
At this point, we do not know whether there are other classes of trees than
$Z_4$ discussed above or those spanned by vertex disjoint copies of $K_{1,3}$
that can have an eigenvalue exactly equal to 4.
Hence, identifying every possible tree that has an eigenvalue exactly equal to 4 is an open problem. 
\begin{figure}
\begin{center}
\includegraphics{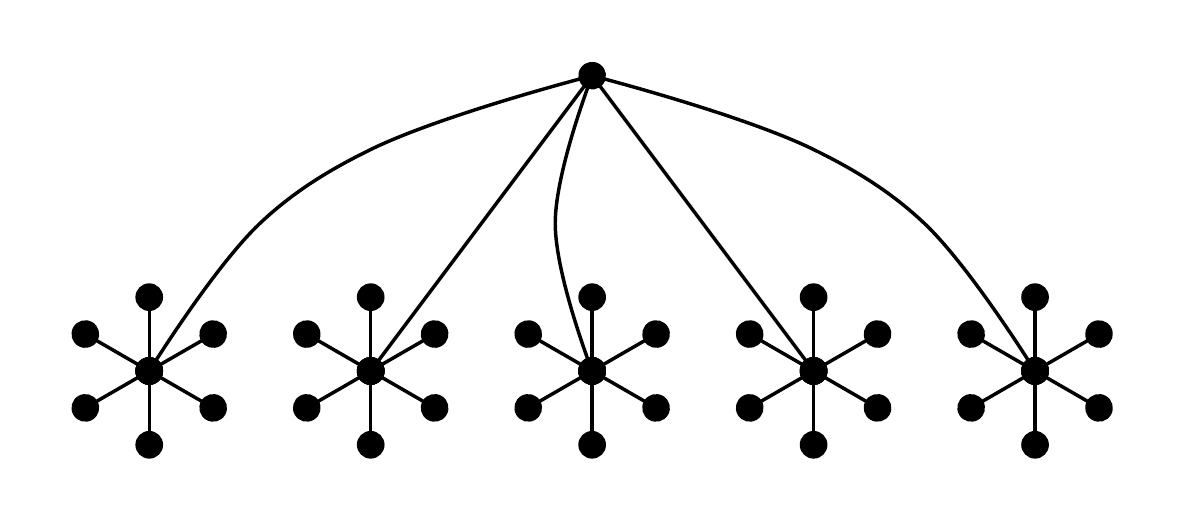}
\caption{Yet another tree $Z_4$ (Example~2.9 of \cite{GRONE-MERRIS-SUNDER} with
$k=4$) that has an eigenvalue exactly equal to 4.  This is non-isomorphic
to any tree spanned by vertex disjoint copies of $K_{1,3}$ such as the one shown
in Figure~\ref{fig:mK13new}(a).}
\label{fig:z4}
\end{center}
\end{figure}

\section{Implication of a Long Path on Eigenvalues}
\label{sec:eigconsequence}
In Section~\ref{sec:genres} we saw that for a graph that has a branch consisting
of a long path, its Laplacian eigenvalue greater than 4 has the property that 
the corresponding eigenvector components along the branch must decay exponentially. 

Here we discuss a consequence of such a structure in terms of the eigenvalues. 
We consider a graph $G$ formed by connecting two graphs $G_1$ and $G_3$ 
with a path $G_2$. 
Note that this is a more general graph than in Section~\ref{sec:genres} 
(which can be regarded as the case without $G_3$). 
We show that if $G_2$ is a long path then any eigenvalue greater than 4 of the 
Laplacian of either of the two subgraphs $G_1 \cup G_2$ and $G_2 \cup G_3$ must
be nearly the same as an eigenvalue of the Laplacian of the whole graph $G$. 

\begin{theorem}
\label{thm:g1g2g3}
Let $G$ be a graph 
obtained by connecting two graphs with a path, whose Laplacian $L$
can be expressed as 
\[L=\begin{bmatrix}
L_1 & E_1^{\mathsf{T}}&0\\
E_1 & L_2&E_2^{\mathsf{T}}\\
0 &E_2&L_3\\
\end{bmatrix},\]
where $E_1$ and $E_2$ have -1 in the top-right corner and 0 elsewhere. 
$L_i$ is $\ell_i\times \ell_i$ for $i=1,2,3$ and $L_2$ represents the path $G_2$,
that is, a tridiagonal matrix with 2 on the diagonals and -1 on the 
off-diagonals.   

Let $\widetilde{\lambda}>4$ be any eigenvalue of the top-left 
$(\ell_1+\ell_2)\times (\ell_1+\ell_2)$ 
(or bottom-right $(\ell_2+\ell_3)\times (\ell_2+\ell_3)$) submatrix of $L$. 
Then there exists an eigenvalue $\lambda$ of $L$ such that 
\begin{equation}  \label{eq:lam4mu}
|\lambda-\widetilde{\lambda}|\leq \widetilde{\gamma}^{\ell_2}, 
\end{equation}
where $\widetilde{\gamma}:=\frac{2}{\widetilde{\lambda}-2}<1$. 
\end{theorem}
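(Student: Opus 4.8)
The plan is a residual (backward-error) argument: I build an approximate eigenpair of $L$ out of the exact eigenpair of the submatrix and show that its residual is exponentially small because it is supported entirely on the single edge joining the connecting path $G_2$ to $G_3$, where the eigenvector has already decayed. I treat the top-left case; the bottom-right case is symmetric by interchanging the roles of $G_1$ and $G_3$. Let $\widetilde{L}$ denote the top-left $(\ell_1+\ell_2)\times(\ell_1+\ell_2)$ submatrix of $L$ and let $\widetilde{v}$ be a unit eigenvector, $\widetilde{L}\widetilde{v}=\widetilde{\lambda}\widetilde{v}$ with $\widetilde{\lambda}>4$. I partition $\widetilde{v}=(v^{(1)},v^{(2)})$ with $v^{(1)}\in\Rf^{\ell_1}$ on $G_1$ and $v^{(2)}\in\Rf^{\ell_2}$ on the path, writing the path components as $\phi_1,\dots,\phi_{\ell_2}$, where $\phi_1$ is adjacent to $G_1$ and $\phi_{\ell_2}$ is the end that meets $G_3$ in the full graph. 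I then extend $\widetilde{v}$ to $\widehat{v}=(v^{(1)},v^{(2)},0)^{\mathsf{T}}\in\Rf^n$ by padding zeros on the $G_3$ block, so that $\|\widehat{v}\|_2=1$.

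Next I would compute the residual $r:=L\widehat{v}-\widetilde{\lambda}\widehat{v}$. Because the zero $G_3$ block of $\widehat{v}$ annihilates the $E_2^{\mathsf{T}}$ coupling, the first two block-rows of $L\widehat{v}$ reproduce exactly $\widetilde{L}\widetilde{v}=\widetilde{\lambda}\widetilde{v}$, so the $G_1$ and $G_2$ blocks of $r$ vanish. The only surviving block is the $G_3$ block, equal to $E_2 v^{(2)}$; since the single nonzero entry of $E_2$ is the $-1$ in its top-right corner, $E_2 v^{(2)}$ has exactly one nonzero component, namely $-\phi_{\ell_2}$. Hence $\|r\|_2=|\phi_{\ell_2}|$, and the whole problem reduces to bounding the terminal path component.

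For this I reuse the exponential-decay mechanism of Theorem~\ref{thm:expdecay}. The path block $L_2$ together with the eigen-equation rows for $\phi_2,\dots,\phi_{\ell_2}$ is identical to the branch setting there, the only change being that the terminal vertex $\phi_{\ell_2}$ now carries diagonal $2$ rather than $1$. Redoing the terminal row gives $|\phi_{\ell_2}|=\tfrac{1}{\widetilde{\lambda}-2}|\phi_{\ell_2-1}|\le\widetilde{\gamma}|\phi_{\ell_2-1}|$, while the interior rows yield $|\phi_{j+1}|\le\widetilde{\gamma}|\phi_j|$ exactly as before; iterating down the path and using $|\phi_1|\le\|\widetilde{v}\|_2=1$ produces $|\phi_{\ell_2}|\le\widetilde{\gamma}^{\ell_2}$. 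I then convert this into an eigenvalue bound by the standard symmetric perturbation fact: for symmetric $L$ and unit $\widehat{v}$, expanding $\widehat{v}$ in an orthonormal eigenbasis of $L$ gives $\min_i|\lambda_i(L)-\widetilde{\lambda}|\le\|L\widehat{v}-\widetilde{\lambda}\widehat{v}\|_2=\|r\|_2\le\widetilde{\gamma}^{\ell_2}$, which is precisely \eqref{eq:lam4mu}.

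I expect the only real obstacle to be the decay step: verifying that the Theorem~\ref{thm:expdecay} argument survives the change of the terminal diagonal from $1$ to $2$ (so that $\widetilde{L}$, a principal submatrix rather than a genuine graph Laplacian, still forces geometric decay at rate $\widetilde{\gamma}$), and tracking the exponent carefully so that the accumulated interior factors and the terminal relation $|\phi_{\ell_2}|=\tfrac{1}{\widetilde{\lambda}-2}|\phi_{\ell_2-1}|$ combine to the stated power $\widetilde{\gamma}^{\ell_2}$. The residual localization and the symmetric perturbation bound are routine by comparison.
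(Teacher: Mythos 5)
Your overall strategy is sound, and it is a genuinely different (and somewhat more elementary) route than the paper's: the paper forms the eigendecomposition $L_{12}=Q\Lambda Q^{\mathsf{T}}$ of the top-left block, conjugates $L$ by $\diag(Q,I)$ so that the coupling to $G_3$ becomes two matrix entries of size $|\phi_{\ell_2}|$, and invokes Weyl's theorem; you instead pad the exact eigenvector of $L_{12}$ with zeros and apply the standard residual bound $\min_i|\lambda_i(L)-\widetilde{\lambda}|\leq\|L\widehat{v}-\widetilde{\lambda}\widehat{v}\|_2$ for symmetric matrices. Your computation that the residual is supported on the single $G_3$-vertex adjacent to the path, so that $\|r\|_2=|\phi_{\ell_2}|$, is correct, and the two perturbation arguments are essentially interchangeable; yours avoids diagonalizing $L_{12}$ altogether.

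However, there is an off-by-one gap in the decay step, precisely at the place you flagged. The path carries $\ell_2$ components $\phi_1,\dots,\phi_{\ell_2}$, so the terminal row plus the interior rows of the path produce only $\ell_2-1$ consecutive-pair contractions; chaining them and then invoking $|\phi_1|\leq 1$ gives $|\phi_{\ell_2}|\leq\widetilde{\gamma}^{\ell_2-1}$, not $\widetilde{\gamma}^{\ell_2}$. The sharper terminal relation $|\phi_{\ell_2}|=\frac{1}{\widetilde{\lambda}-2}|\phi_{\ell_2-1}|=\frac{\widetilde{\gamma}}{2}|\phi_{\ell_2-1}|$ rescues this only when $\widetilde{\gamma}\geq\frac{1}{2}$, i.e.\ $\widetilde{\lambda}\leq 6$; for $\widetilde{\lambda}>6$ you are genuinely one factor short (and in the degenerate case $\ell_2=1$ the quantity $\phi_{\ell_2-1}$ in your terminal relation is not even a path component but already a vertex of $G_1$, a hint that the coupling into $G_1$ must enter the argument). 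The fix is to use one more row of the eigen-equation, namely the row of the first path vertex $\phi_1$ itself: in $L_{12}$ that vertex has diagonal $2$ and couples to $\phi_2$ and to one vertex of $G_1$, say with component $\psi$. The same monotonicity-then-contraction argument (using $|\phi_2|\leq|\phi_1|$, so $(\widetilde{\lambda}-3)|\phi_1|\leq|\psi|$, hence $|\phi_1|\leq|\psi|$ and then $|\phi_1|\leq\widetilde{\gamma}|\psi|\leq\widetilde{\gamma}$) supplies the missing factor and yields $|\phi_{\ell_2}|\leq\widetilde{\gamma}^{\ell_2}|\psi|\leq\widetilde{\gamma}^{\ell_2}$, as required in \eqref{eq:lam4mu}. (The paper's own proof is equally terse here—it simply says ``as in Theorem~\ref{thm:expdecay}''—but its claimed exponent likewise needs this extra step into $G_1$.)
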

\begin{proof}
We treat the case where $\widetilde{\lambda}$ is an eigenvalue of the top-left
$(\ell_1+\ell_2)\times (\ell_1+\ell_2)$ part of $L$, which we denote by $L_{12}$.
The other case is analogous. 

As in Theorem~\ref{thm:expdecay}, we can show that any 
eigenvalue $\widetilde{\lambda}>4$ of $L_{12}$ has its corresponding eigenvector
components decay exponentially along the path $G_2$. This means 
that the bottom eigenvector component is smaller than 
$\widetilde{\gamma}^{\ell_2}$ in absolute value (we normalize the eigenvector 
so that it has unit norm) where 
$\widetilde{\gamma}:=\frac{2}{\widetilde{\lambda}-2}<1$ as in \eqref{eq:decayrate}. 

Let $L_{12}=Q \Lambda Q^{\mathsf{T}}$ be an eigendecomposition where 
$Q^{\mathsf{T}}Q=I$ and the eigenvalues are arranged so that 
$\widetilde{\lambda}$ appears in the top diagonal of $\Lambda$. 
For notational convenience let $\ell_{12} := \ell_1+\ell_2$.
Then, consider the matrix
\begin{equation}
  \label{eq:Lrelation}
\widehat L=\begin{bmatrix}
Q^{\mathsf{T}}&0 \\  0&I
\end{bmatrix}L\begin{bmatrix}
Q&0 \\  
0&I
\end{bmatrix}=\begin{bmatrix}
\Lambda & \bv \bbe_1^{\mathsf{T}}\\
\bbe_1\bv^{\mathsf{T}} &L_3\end{bmatrix},
\end{equation}
where $\bbe_1=(1,0,\ldots,0)^{\mathsf{T}} \in \Rf^{\ell_3}$ and 
$\bv=(v_1,\ldots,v_{\ell_{12}})^{\mathsf{T}} \in \Rf^{\ell_{12}}$. Direct calculations
show that $v_i=-q_{\ell_{12},i}$ where $q_{\ell_{12},i}$ is the bottom component of the
eigenvector $\bq_i$ of $L_{12}$ corresponding to the $i$th eigenvalue. 
In particular, by the above argument we have 
$|q_{\ell_{12},1}|=|v_{1}|\leq \widetilde{\gamma}^{\ell_2} (\ll 1)$.

Note that in the first row and column of $\widehat L$, the only nonzeros are the
diagonal (which is $\widetilde{\lambda}$), and the $(1,\ell_{12}+1)$ and 
$(\ell_{12}+1,1)$ entries, both of which are equal to $v_{1}$. 
Now, viewing the $(1,\ell_{12}+1)$ and $(\ell_{12}+1,1)$ entries of 
$\widehat L$ as perturbations (write $\widehat L=\widehat L_1+\widehat L_2$ 
where $\widehat L_1$ is obtained by setting the $(1,\ell_{12}+1)$ and 
$(\ell_{12}+1,1)$ entries of $\widehat L$ to 0) and 
using Weyl's theorem \cite[Theorem~8.1.5]{GOLUB-VANLOAN} we see that there 
exists an eigenvalue $\lambda$ of $\widehat L$ (and hence of $L$) that lies in 
the interval $[\widetilde{\lambda}-\|\widehat L_2\|_2,\widetilde{\lambda}+\|\widehat L_2\|_2]=
[\widetilde{\lambda}-|v_{1}|,\widetilde{\lambda}+|v_{1}|]$. 
Together with $|v_{1}|\leq \widetilde{\gamma}^{\ell_2}$ we obtain 
\eqref{eq:lam4mu}. 
\end{proof}

Recall that $\widetilde{\gamma}^{\ell_2}$ decays exponentially with $\ell_2$, and 
it can be negligibly small for moderate $\ell_2$; for example, for 
$(\lambda,\ell_2)=(5,30)$ we have $\widetilde{\gamma}^{\ell_2}=5.2\times 10^{-6}$. 
We conclude that the existence of a subgraph consisting of a long path implies
that the eigenvalues $\lambda>4$ of a subgraph must match those of the whole 
graph to high accuracy.

\section{On the Eigenvector of the Largest Eigenvalue}
\label{sec:counterex}
In view of the results in Section~\ref{sec:genres} it is natural to ask whether
it is always true that the largest component of the eigenvector corresponding to
the largest eigenvalue of a Laplacian matrix of a graph lies on the vertex
of the highest degree. 
Here we show by a counterexample that this is not necessarily true. 

Consider for example a tree as in Figure~\ref{fig:mK13-ga}, which is generated
as follows: first we connect $m$ copies of $K_{1,2}$ (equal to $P_3$) as shown in 
Figure~\ref{fig:mK13new}(a); then add to the right a comet $S(\ell,1,1,1,1)$ as 
in Figure~\ref{fig:starlike}(b). 

\begin{figure}
  \begin{center}
    \includegraphics[width=0.9\textwidth]{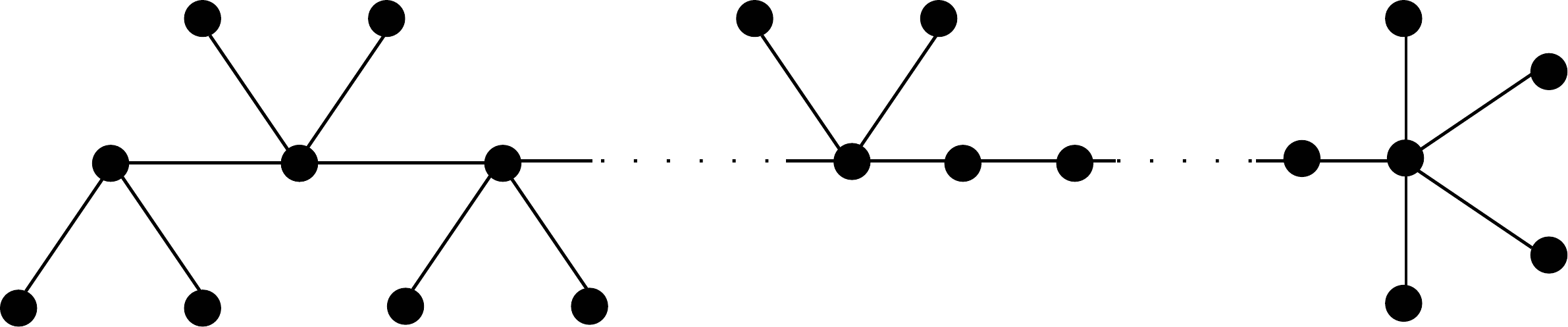}
\caption{Counterexample graph for the conjecture.}
\label{fig:mK13-ga}
  \end{center}
\end{figure}

Now for sufficiently large $m$ and $\ell$ ($m,\ell\geq 5$ is sufficient), 
the largest component in the eigenvector $\bphi$ corresponding to the largest
eigenvalue of the resulting Laplacian $L$ occurs at one of the central vertices
of $K_{1,2}$, not at the vertex of degree 5 belonging to the comet.

Let us explain how we came up with this counterexample. 
The idea is based on two facts. 
The first is the discussion in Section~\ref{sec:eigconsequence}, where we noted
that a long path $G_2$ implies any eigenvalue larger than 4 must be close to an
eigenvalue of a subgraph $G_1 \cup G_2$ or $G_2 \cup G_3$. 
Therefore, in the notation of Section~\ref{sec:eigconsequence}, by connecting
two graphs ($G_1=mK_{1,2}$ and $G_3=K_{1,5}$, a star) 
with a path $G_2$ such that 
the largest eigenvalue $\widetilde{\lambda}$ of $L_{12}$ is larger than that of
$L_{23}$, we ensure that the largest eigenvalue $\lambda$ of $L$ is very close to
$\widetilde{\lambda}$. 
The second is the Davis-Kahan $\sin\theta$ theorem \cite{DAVIS-KAHAN}, which
states that a small perturbation of size $\widetilde{\gamma}^{\ell_2}$ in the 
matrix $\widehat L_1$ (recall the proof of Theorem~\ref{thm:g1g2g3})
can only induce small perturbation also in the eigenvector: its angular 
perturbation is bounded by $\widetilde{\gamma}^{\ell_2}/\delta$, where $\delta$ is
the distance between $\lambda$ and the eigenvalues 
of $\widehat L$ after removing its first row and column. 
Furthermore, the eigenpair $(\widetilde{\lambda},\widetilde{\bphi})$ of 
$\widehat L_1$ satisfies $\widetilde{\bphi}=(1,0,\ldots,0)^{\mathsf{T}}$, and 
the eigenvectors $\widehat \bphi$ ($\simeq \widetilde{\bphi}$ by Davis-Kahan) of
$\widehat L$ and $\bphi$ of $L$ corresponding to $\lambda$ are related by 
$\bphi=\left[\begin{smallmatrix}Q&0\\0&I\end{smallmatrix}\right]\widehat \bphi$,
which follows from \eqref{eq:Lrelation}. 
Therefore, $\bphi$ has its large components at the vertices belonging to $G_1$. 
In view of these our approach was to find two graphs $G_1$ and $G_3$ such that
the highest degrees of the vertices of $G_1$ and $G_3$ are 4 and 5, respectively,
and the largest eigenvalue of the Laplacian of $G_1$ is larger than that of $G_3$.

\section{Discussion}
\label{sec:disc}
In this paper, we obtained precise understanding of the phase transition
phenomenon of the combinatorial graph Laplacian eigenvalues and eigenvectors 
for starlike trees.  For a more complicated class of graphs including
those representing dendritic trees of RGCs, we proved in 
Theorem~\ref{thm:ew4ltdeg2} that the number of the eigenvalues greater than or
equal to 4 is bounded from above by the number of vertices whose degrees are 
strictly higher than 2.  In Theorem~\ref{thm:expdecay}, we proved
that if a graph has a branching path, the magnitude of the components
of an eigenvector corresponding to any eigenvalue greater than 4
along such a branching path decays exponentially toward its leaf.
In Remark~\ref{rem:ew4}, we also extended Theorem~\ref{thm:expdecay}
for the case of $\lambda=4$ although the decay may not be exponential.

As for Q3 raised in Introduction---``Is there any tree that possesses an eigenvalue exactly equal to 4?''---we showed that any tree with $n$ vertices
($n=4m$ for some $m \in \N$) spanned by $m$ vertex disjoint copies of $K_{1,3}$
possesses an eigenvalue exactly equal to 4 in Corollary~\ref{cor:mK13}, and
that such class of trees are the only ones that can have an eigenvalue exactly 
equal to 4 if $n\leq 11$ 
in Proposition~\ref{thm:n8}.
On the other hand, for larger $n$,
 we pointed out that not only those spanned by vertex disjoint copies of $K_{1,3}$, but also a tree called $Z_4$ discovered
in \cite{GRONE-MERRIS-SUNDER} and shown in Figure~\ref{fig:z4}
have an eigenvalue exactly equal to 4.  
A challenging yet interesting 
question is whether or not one can identify every possible tree that has
an eigenvalue 4.

Another quite interesting question is Q4 raised in Introduction: ``Can a 
simple and connected graph, not necessarily a tree, have eigenvalues 
equal to 4?''  The answer is a clear ``Yes.''  For example, 
the $d$-cube ($d > 1$), i.e., the $d$-fold Cartesian product of $K_2$ 
with itself is known to have the Laplacian eigenvalue 4 with multiplicity $d(d-1)/2$;
see e.g., \cite[Sec.~4.3.1]{BIYIKOGLU-LEYDOLD-STADLER}. 

Another interesting example is a regular finite lattice graph in 
$\Rf^d$, $d > 1$, which is simply the $d$-fold Cartesian product of 
a path $P_n$ shown in Figure~\ref{fig:path} with itself.
Such a lattice graph has repeated eigenvalue 4.  
In fact, each eigenvalue and the corresponding eigenvector of such a lattice 
graph can be written as
\begin{eqnarray}
\lambda_{j_1, \dots, j_d} &=& 4 \sum_{i=1}^d \sin^2\( \frac{j_i \pi}{2n} \) 
\label{eqn:lattice-ew}\\
\phi_{j_1, \dots, j_d}(x_1, \dots, x_d) &=& \prod_{i=1}^d \cos \( \frac{j_i \pi (x_i+\frac{1}{2})}{n} \),
\label{eqn:lattice-ev}
\end{eqnarray}
where $j_i, x_i \in \Z/n\Z$ for each $i$,
as shown by Burden and Hedstrom \cite{BURDEN-HEDSTROM}.
Note that \eqref{eqn:lattice-ew} and \eqref{eqn:lattice-ev} are also valid for
$d=1$.  In that case these reduce to \eqref{eqn:1d-ew} 
that we already examined in Section~\ref{sec:defs}.

Now, determining $m_G(4)$, i.e., the multiplicity of the eigenvalue 4 of
this lattice graph, is equivalent to finding the number of the integer solutions
$(j_1, \dots, j_d) \in \(\Z/n\Z\)^d$ 
to the following equation:
\begin{equation}
\label{eqn:lattice-sine-sum}
\sum_{i=1}^d \sin^2\( \frac{j_i \pi}{2n} \) = 1.
\end{equation}
For $d=1$, there is no solution as we mentioned in Section~\ref{sec:defs}.
For $d=2$, it is easy to show that $m_G(4) = n-1$ by direct examination
of \eqref{eqn:lattice-sine-sum} using some trigonometric identities.
For $d=3$, $m_G(4)$ behaves in a much more complicated manner, which is
deeply related to number theory. We expect that more complicated situations
occur for $d > 3$.
We are currently investigating this 
 on regular finite
lattices.
On the other hand, it is clear from \eqref{eqn:lattice-ev} that 
the eigenvectors corresponding to the eigenvalues greater than or equal to
4 on such lattice graphs cannot be localized or concentrated on those vertices
whose degree is higher than 2 unlike the tree case.
Theorem~\ref{thm:expdecay} and Remark~\ref{rem:ew4} do not apply either
since such a finite lattice graph do not have branching paths.

Finally, we would like to note that even a simple path, such as the one shown in
Figure~\ref{fig:path}, exhibits the eigenfunction localization phenomena 
\emph{if it has nonuniform edge weights}, which we recently observed numerically.
We will report our progress on investigation of localization phenomena on such 
weighted graphs at a later date.

\section*{Acknowledgments}
We thank the referees for their remarks and suggestions. 
This research was partially supported by the following grants from the Office
of Naval Research: N00014-09-1-0041; N00014-09-1-0318.
A preliminary version of a part of the material in this paper 
\cite{SAITO-WOEI-KOKYUROKU} was presented at the workshop on 
``Recent development and scientific applications in wavelet analysis'' held 
at the Research Institute for Mathematical Sciences (RIMS), Kyoto University,
Japan, in October 2010, and at the 7th International Congress on Industrial and
Applied Mathematics (ICIAM), held in Vancouver, Canada, in July 2011.




\begin{thebibliography}{10}
\bibitem{BIYIKOGLU-LEYDOLD-STADLER}
B{\i}y{\i}ko\u{g}lu T., Leydold J., Stadler P. F.
\newblock (2007) \emph{Laplacian Eigenvectors of Graphs}.
\newblock Lecture Notes in Mathematics 1915.
\newblock Springer, New York.

\bibitem{BURDEN-HEDSTROM}
Burden R. L., Hedstrom G. W. 
\newblock (1972) The distribution of the eigenvalues of the discrete
  {L}aplacian.
\newblock \emph{BIT} 12:475--488.

\bibitem{DAS}
Das K. C. 
\newblock (2007) Some spectral properties of the {L}aplacian matrix of starlike
  trees.
\newblock \emph{Italian J. Pure Appl. Math.} 21:197--210.

\bibitem{DAS-UPBDS}
Das K. C. 
\newblock (2004) A characterization on graphs which achieve the upper bound for
the largest {L}aplacian eigenvalue of graphs.
\newblock \emph{Linear Algebra Appl.} 376:173--186.

\bibitem{DAVIS-KAHAN}
Davis C., Kahan W. M. 
\newblock (1970) The rotation of eigenvectors by a perturbation. {III}.
\newblock \emph{SIAM J. Numer. Anal.} 7:1--46.

\bibitem{GOLUB-VANLOAN}
Golub G. H., Van~Loan C. F.
\newblock (1996) \emph{Matrix Computations}.
\newblock The Johns Hopkins Univ. Press, Baltimore, MD, 3rd edition.

\bibitem{GRONE-MERRIS-2}
Grone R., Merris R.
\newblock (1994) The {L}aplacian spectrum of a graph {II}.
\newblock \emph{SIAM J. Discrete Math.} 7:221--229.

\bibitem{GRONE-MERRIS-SUNDER}
Grone R., Merris R., Sunder, V. S.
\newblock (1990) The {L}aplacian spectrum of a graph.
\newblock \emph{SIAM J. Matrix Anal. Appl.} 11:218--238.

\bibitem{GUO}
Guo J. M. 
\newblock (2007) The $k$th {L}aplacian eigenvalue of a tree.
\newblock \emph{J. Graph Theor.} 54:51--57.

\bibitem{MERRIS1}
Merris, R.
\newblock (1994) Laplacian matrices of graphs: A survey.
\newblock \emph{Linear Algebra Appl.} 197/198:143--176.

\bibitem{SAITO-WOEI-DENDRITES}
Saito N., Woei E.
\newblock (2009) Analysis of neuronal dendrite patterns using eigenvalues of
  graph {L}aplacians.
\newblock \emph{JSIAM Letters} 1:13--16.
\newblock Invited paper.

\bibitem{SAITO-WOEI-KOKYUROKU}
Saito N., Woei E.
\newblock (2011) On the phase transition phenomenon of graph {L}aplacian
  eigenfunctions on trees.
\newblock \emph{RIMS Kokyuroku} 1743:77--90.

\bibitem{STEVANOVIC}
Stevanovi\'c, D.
\newblock (2003) Bounding the largest eigenvalue of trees in terms of
  the largest vertex degree.
\newblock \emph{Linear Algebra Appl.} 360:35--42.

\bibitem{STRANG-DCT}
Strang G. 
\newblock (1999) The discrete cosine transform.
\newblock \emph{SIAM Review} 41:135--147.

\bibitem{VARGA}
Varga R. S.
\newblock (2004) \emph{Ger{\v{s}}gorin and His Circles}, 
\newblock Springer, New York.

\bibitem{WOEI-PHD}
Woei E. 
\newblock (2012) Ph.D.\ dissertation, Dept.\ Math., Univ.\ California, Davis. 
\newblock In preparation.

\bibitem{ZHANG-LUO}
Zhang, X.-D., Luo, R.
\newblock (2002) The spectral radius of triangle-free graphs.
\newblock \emph{Australas. J. Combin.} 26:33--39.
\end{thebibliography}

\end{document}